\documentclass[11pt]{article}

\usepackage[T1]{fontenc}
\usepackage[utf8]{inputenc}
\usepackage{lmodern}
\usepackage{amsmath,amssymb,amsthm,mathtools}
\usepackage{enumitem}
\usepackage[margin=1.15in]{geometry}
\usepackage{authblk}
\usepackage{booktabs}
\usepackage{tikz}
\usetikzlibrary{arrows.meta,calc,positioning}
\usepackage{microtype}
\usepackage[hidelinks,pdfpagelabels,hypertexnames=false]{hyperref}
\hypersetup{
  pdftitle={A complete classification of metrizable and strictly metrizable theta graphs},
  pdfauthor={Guangfu Wang},
  pdfsubject={Graph metrizability and theta graphs}
}
\allowdisplaybreaks

\newtheorem{theorem}{Theorem}[section]
\newtheorem{lemma}[theorem]{Lemma}
\newtheorem{proposition}[theorem]{Proposition}
\newtheorem{corollary}[theorem]{Corollary}

\theoremstyle{definition}
\newtheorem{definition}[theorem]{Definition}
\theoremstyle{remark}
\newtheorem{remark}[theorem]{Remark}

\newcommand{\cP}{\mathcal P}
\newcommand{\cQ}{\mathcal Q}

\begin{document}

\title{A complete classification of metrizable and strictly metrizable theta graphs}
\author{Guangfu Wang\thanks{Corresponding author: \texttt{gfwang@ytu.edu.cn}}}
\affil{School of Mathematics and Information Sciences, Yantai University,\\Yantai 264005, Shandong Province, China}
\date{}
\maketitle

\begin{abstract}
Cizma and Linial asked for a classification of the metrizable theta graphs.  We solve both their problem and its strict analogue.  For $a\le b\le c$, the theta graph $\Theta_{a,b,c}$ is metrizable if and only if $a\le 2$ or $(a,b,c)=(3,3,3)$, and it is strictly metrizable if and only if $a\le2$.  Thus $\Theta_{3,3,3}$ is precisely the exceptional theta graph that is metrizable but not strictly metrizable.  The negative directions follow from the known obstructions $\Theta_{3,3,4}$ and $\Theta_{3,3,3}$ together with topological-minor closure.  The positive direction is constructive.  Consistency turns the possible detours through a length-two arm into compatible Ferrers relations, which are represented by one-dimensional potentials; all resulting shortest-path comparisons have positive slack.  The exceptional ordinary-metrizable graph $\Theta_{3,3,3}$ is handled by a two-threshold weak Ferrers representation.  The proof is structural, yields rational edge lengths algorithmically, and uses no enumeration of path systems.
\end{abstract}

\noindent\textbf{Keywords:} metrizable graph; strictly metrizable graph; consistent path system; shortest path; theta graph; Ferrers relation.

\medskip
\noindent\textbf{2020 Mathematics Subject Classification:} 05C12, 05C22, 05C75, 05C85.

\section{Introduction}

Shortest paths in weighted graphs are usually studied from a metric viewpoint: one assigns positive lengths to the edges and then analyzes the resulting geodesics.  Cizma and Linial proposed the reverse problem \cite{CizmaLinial2022}.  Given a graph $G$, choose one simple path $P_{u,v}$ between every pair of vertices $u,v$.  The system is \emph{consistent} if every subpath of a chosen path is itself the chosen path between its endpoints.  It is \emph{metrizable} if there are positive edge lengths for which every prescribed path is a shortest path.  It is \emph{strictly metrizable} if those prescribed paths can all be made unique shortest paths.  The corresponding graph properties require every consistent path system to admit such a realization.

This definition isolates a graph-theoretic analogue of geodesic geometry.  It is close to, but distinct from, Bodwin's topological characterization of partial systems of unique shortest paths \cite{Bodwin2019}: graph metrizability concerns full path systems in a fixed graph and allows ties among shortest paths.  Cizma and Linial proved the structural results that make the notion tractable: cycles and outerplanar graphs are strictly metrizable, metrizability is closed downward under topological minors, the class has a finite forbidden-topological-minor characterization, and metrizability is decidable in polynomial time in principle \cite{CizmaLinial2022}.  Subsequent work constructed irreducible non-metrizable systems \cite{CizmaLinial2023}, developed a structure theorem for metrizable graphs \cite{ChudnovskyCizmaLinial2026}, showed that the strict variant is minor-closed \cite{ChudnovskyCizmaLinial2025}, and studied both the enumerative gap between consistent and metric path systems \cite{CizmaLinial2026Number} and quantitative approximation of non-metric systems \cite{CizmaLinial2026Approx}.

Open Problem 11.3 of \cite{CizmaLinial2022} asks for the classification of metrizable theta graphs.  The theta graph $\Theta_{a,b,c}$ consists of two branch vertices $s,t$ joined by three internally disjoint $s$--$t$ paths, called arms, of lengths $a,b,c$.  Since the parameters may be reordered, we assume $a\le b\le c$.  Cizma and Linial observed that $\Theta_{a,b,c}$ is metrizable when $a=1$, because such a graph is outerplanar, and they exhibited $\Theta_{3,3,4}$ as a non-metrizable obstruction.  Topological-minor closure then makes every $\Theta_{a,b,c}$ with $a\ge3$, $b\ge3$, and $c\ge4$ non-metrizable.  Thus the unresolved ordinary cases are exactly the infinite family with $a=2$ and the single graph $\Theta_{3,3,3}$.  In the strict setting, $\Theta_{3,3,3}$ is a known topological obstruction \cite[Fig.~13(9)]{ChudnovskyCizmaLinial2025}.

Our main theorem resolves these cases and also settles the strict analogue throughout the theta family.

\begin{theorem}\label{thm:main}
Let $a\le b\le c$.  Then
\[
        \Theta_{a,b,c}\text{ is metrizable}
        \quad\Longleftrightarrow\quad
        a\le2\text{ or }(a,b,c)=(3,3,3).
\]
Equivalently, $\Theta_{a,b,c}$ is non-metrizable if and only if
\[
        a\ge3,
        \qquad
        b\ge3,
        \qquad
        c\ge4.
\]
Moreover,
\[
        \Theta_{a,b,c}\text{ is strictly metrizable}
        \quad\Longleftrightarrow\quad
        a\le2.
\]
\end{theorem}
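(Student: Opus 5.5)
The proof splits into a negative and a positive direction.

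\textbf{Negative direction.} This is pure bookkeeping with the known obstructions and topological-minor closure. Suppose $a\ge3$, $b\ge3$, $c\ge4$. Subdividing arms of $\Theta_{3,3,4}$ produces $\Theta_{a,b,c}$, so $\Theta_{3,3,4}$ is a topological minor of it. Since $\Theta_{3,3,4}$ is non-metrizable \cite{CizmaLinial2022}, so is $\Theta_{a,b,c}$. Likewise, if $a\ge3$ then $\Theta_{3,3,3}$ is a topological minor of $\Theta_{a,b,c}$. Since $\Theta_{3,3,3}$ is not strictly metrizable \cite{ChudnovskyCizmaLinial2025}, neither is $\Theta_{a,b,c}$. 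The strict property is minor-closed, hence also topological-minor closed. The two displayed forms of the ordinary statement are equivalent because $a\le b\le c$ with $a\ge3$ and not $c\ge4$ forces $(3,3,3)$.

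\textbf{Positive direction.} It remains to show that $\Theta_{a,b,c}$ with $a\le2$ is strictly metrizable, and that $\Theta_{3,3,3}$ is metrizable. The case $a=1$ is outerplanar and is already settled. For $a=2$, let the short arm be $s\,m\,t$. Fix a consistent path system $\cP$.

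Every chosen path between two vertices $x,y$ on the cycle $C=B\cup C'$ formed by the two long arms either stays on $C$ or detours through $m$. Consistency forces the detour to contain all of $s\,m\,t$ and to be a subpath structure compatible with the other pairs. I would encode, for $x$ on arm $B$ and $y$ on arm $C'$, the relation $R=\{(x,y): P_{x,y}\ \text{uses } m\}$, together with which way around $C$ the non-detouring paths go.

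The key claim is that consistency makes these relations \emph{Ferrers relations}. Concretely, I would show that if $(x,y)\in R$ and $x'$ is farther from the branch points than $x$ (in the appropriate direction), then $(x',y)\in R$ as well. The reason is that subpaths of $P_{x',y}$ through $x$ must agree. Ferrers relations admit one-dimensional representations: there exist functions $f,g$ with $(x,y)\in R \iff f(x)+g(y)>0$ (threshold form).

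I would then choose edge lengths on $C$ so that the partial sums along each arm realize these potentials, adjusted by a small generic perturbation. The arm $s\,m\,t$ gets length equal to an appropriate constant. I would also first use a cycle-metrizability argument, via strict metrizability of cycles, for pairs whose paths avoid $m$, and check that these constraints are monotone compatible with the Ferrers potentials. Every required inequality then reduces to a strict comparison of potentials, giving positive slack and hence uniqueness.

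\textbf{Main obstacle.} The main obstacle is the compatibility of the several Ferrers relations (pairs $B\times C'$, pairs with one endpoint $m$, and pairs on the same arm whose paths go around versus straight) with a \emph{single} length assignment. I would first try to show that all of them are nested threshold relations with respect to common linear orders on the arm vertices, so that one pair of potentials serves all of them simultaneously.

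For $\Theta_{3,3,3}$, full strictness is impossible, so I would allow equality. The detour relations then only form a weak Ferrers structure with two thresholds. I would represent this as $f(x)+g(y)\ge\theta_1$ versus $\le\theta_2$, with ties permitted. Because the graph is small, symmetry reduces the consistent systems to a few structural types. Each type would get explicit rational lengths, checking only non-strict inequalities.
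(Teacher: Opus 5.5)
Your negative direction is correct and is the paper's argument. The positive direction for $a=2$, however, rests on a false key claim and leaves the decisive step open.

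In the main case $P_{s,t}=s\,m\,t$, let $D^-$ be $s$ together with the internal vertices of a long arm $D$ whose prescribed path to $t$ runs back through $s$ and across $m$, and define $D^+$ symmetrically. Consistency forces the cross-arm pairs of internal vertices whose path uses $m$ to be exactly those in $B^-\times C^+\cup B^+\times C^-$. Suppose all four sets contain internal vertices (e.g.\ both long arms have very heavy middle edges), and take internal $x\in B^-$, $x'\in B^+$, $y\in C^-$, $y'\in C^+$. Then $(x,y')$ and $(x',y)$ use $m$ while $(x,y)$ and $(x',y')$ do not. This is a $2\times2$ permutation pattern, so your relation $R$ is Ferrers under no choice of orders. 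Your monotonicity is also reversed: subpath consistency propagates membership toward the branch vertices, not away from them. What is actually Ferrers is the $S$/$T$ choice on $B^0\times C^0$ and the same-arm detour relation on $D^-\times D^+$.

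The compatibility issue you call the main obstacle is where the proof lives. Hoping for nested thresholds in common orders does not settle it, because the relations have different forms: a sum condition $X_i+Y_j<0$, a difference condition $X_j-X_i>M_D+L$, and a one-variable cut $X_i<h$ for paths from $m$. The paper makes one pair of potential sequences serve all three because consistency confines each to its own zone: $B^0\times C^0$, $D^-\times D^+$, and $D^0$ respectively. It then separates scales. Middle potentials lie in $(-1,1)$ and share a single threshold $h$ with the $m$-cut (Lemma~\ref{lem:additive}). Prefix potentials lie below $-L$ and suffix potentials above $L$, with $M_D$ chosen by Lemma~\ref{lem:difference} so that the difference condition holds exactly on detour pairs. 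Edge lengths are half the potential increments.

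Two further pieces are missing.

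First, you never treat $P_{s,t}\ne s\,m\,t$. Then no path between long-arm vertices uses $m$, and the long arms carry an arbitrary neighborly cycle system. The arm-potential construction, which makes $s\,m\,t$ the $s$--$t$ geodesic, is unavailable. What is needed is a \emph{particular} strict realization of the cycle system in which $d(s,v)-d(t,v)$ separates, by a threshold, the vertices whose path from $m$ starts with $ms$ from those starting with $mt$. An arbitrary strict realization can fail. Take the $7$-cycle formed by arms $s,b_1,b_2,t$ and $s,c_1,c_2,c_3,t$ with unit weights. The admissible cut $\{s,b_1,b_2\}$ then has $d(s,b_2)-d(t,b_2)=1>-2=d(s,c_1)-d(t,c_1)$. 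For this case the paper needs the Cizma--Linial persistent-edge structure theorem, a cut-respecting contraction, and explicit ray weightings (Lemmas~\ref{lem:persistent_expansion} and~\ref{lem:cycle_cut}). It also needs the reduction to neighborly systems, which you omit, to make the cycle system non-trivial. ``Checking monotone compatibility'' does not substitute for this.

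Second, for $\Theta_{3,3,3}$ your plan is an unexecuted enumeration, and the two thresholds are attached to the wrong quantities. They are not bounds on $f(x)+g(y)$. They are thresholds $h_b<h_a$ on single-vertex potentials, representing the two nested cuts that record whether paths from the two internal vertices of the selected arm leave toward $s$. The $S$/$T$ relation on $B^0\times C^0$ is instead represented by the weak sign of $X_i+Y_j$. What you would have to prove is that this weak joint representation always exists. The paper does so in Lemma~\ref{lem:two_threshold}, using that each middle chain has at most two vertices. The only nontrivial constraint is then an inequality between two gaps, which can be met inside the nested type intervals. Equality is forced in one extremal configuration, which is exactly why strictness fails there.
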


The theorem gives the promised complete answer for theta graphs.  Its significance is threefold.  First, it removes the last undetermined ordinary-metrizable cases left after the obstruction $\Theta_{3,3,4}$ and topological-minor closure: the infinite family with one arm of length $2$ and the exceptional graph $\Theta_{3,3,3}$.  Second, it completes the strict classification and shows that $\Theta_{3,3,3}$ is precisely the theta graph separating the two notions.  Third, the proof gives a structural realization mechanism rather than a computer-aided enumeration of consistent systems.  The mechanism is a Ferrers-potential construction: consistency turns the possible detours through the short arm into nested Ferrers relations, and these relations are then encoded by one-dimensional potentials whose successive differences become the desired edge lengths.

First we reduce to neighborly path systems, since a non-neighborly system on a theta graph avoids some edge and is therefore supported on an outerplanar graph.  If the length-two arm of $\Theta_{2,b,c}$ is not the prescribed $s$--$t$ path, a cycle-cut realization lemma reduces the problem to the metrizability of cycles.  The main case is therefore $P_{s,t}=s-a-t$.  There, consistency partitions each long arm into a prefix, a middle interval, and a suffix.  Same-arm prescribed paths through the short arm form difference Ferrers relations, while cross-arm choices in the middle rectangle form an additive Ferrers relation.  We realize these relations by potentials along the two long arms and then define the edge lengths from successive potential differences.  The graph $\Theta_{3,3,3}$ is exceptional but positive: the selected length-three arm supplies two nested cuts, and because each remaining middle interval has size at most two, a two-threshold Ferrers representation suffices.

A separate companion manuscript uses the strict theta theorem proved here as
an external input in a global forbidden-minor characterization of strictly
metrizable graphs \cite{WangStrictForbidden2026}.  That paper cites rather
than reproves the theta classification; conversely, the present article makes
no claim about the global forbidden-minor list.

The paper is organized as follows.  Section~2 contains definitions, notation, and the negative directions.  Section~3 proves a cut-respecting cycle realization theorem and the required Ferrers representation lemmas.  Section~4 proves strict metrizability of $\Theta_{2,b,c}$, ordinary metrizability of $\Theta_{3,3,3}$, and the main theorem.  Section~5 records the minimal theta obstructions and the algorithmic content of the construction.

\section{Preliminaries and reductions for theta graphs}

All graphs in this paper are finite and connected, and all paths are simple unless explicitly stated otherwise.  The length of a path is its number of edges.  If $Q$ is a path and $x,y$ are vertices of $Q$, then $Q[x,y]$ denotes the subpath of $Q$ with endpoints $x$ and $y$, read in either direction as needed.  Since our path systems are indexed by unordered vertex pairs, two opposite orientations of the same path are identified.

If $w:E(G)\to(0,\infty)$ is an edge weighting and $Q$ is a path, write
\[
        w(Q)=\sum_{e\in E(Q)}w(e).
\]
The corresponding distance is
\[
        d_w(x,y)=\min\{w(Q): Q\text{ is an }x\text{--}y\text{ path in }G\}.
\]
Thus a path is $w$-shortest, or a geodesic, if its $w$-length equals this minimum.  A weighting \emph{strictly} induces a path system if each prescribed path is strictly shorter than every other simple path with the same endpoints; ordinary metrizability in this paper allows ties.

We use the following notation for paths on theta graphs.  If
\[
        D: s=d_0,d_1,\ldots,d_m=t
\]
is an oriented $s$--$t$ arm and $x,y\in V(D)$, then $xDy$ denotes the subpath of $D$ from $x$ to $y$.  Concatenations such as $d_iDs\,a\,t$ mean the path obtained by first following the subpath $d_iDs$ and then the edges $sa$ and $at$; repeated vertices at concatenation points are suppressed.  A \emph{prefix} of $D$ means a set of the form $\{d_0,d_1,\ldots,d_p\}$, and a \emph{suffix} means a set of the form $\{d_q,d_{q+1},\ldots,d_m\}$.

\begin{definition}
Let $G=(V,E)$ be a graph.  A \emph{path system} $\cP$ in $G$ is a collection
\[
        \cP=\{P_{u,v}:\{u,v\}\in \binom{V}{2}\},
\]
where $P_{u,v}$ is a simple $u$--$v$ path.  The system is \emph{consistent} if, whenever $x$ and $y$ lie on $P_{u,v}$, the subpath $P_{u,v}[x,y]$ is the prescribed path $P_{x,y}$.  It is \emph{neighborly} if $P_{u,v}=uv$ for every edge $uv\in E$.
\end{definition}

\begin{definition}
A positive edge weighting $w:E\to(0,\infty)$ \emph{induces} a path system $\cP$ if every $P_{u,v}$ is a $w$-shortest $u$--$v$ path, and it \emph{strictly induces} $\cP$ if every $P_{u,v}$ is the unique $w$-shortest $u$--$v$ path.  A path system is \emph{metrizable}, respectively \emph{strictly metrizable}, if it is induced, respectively strictly induced, by some positive edge weighting.  A graph has either property if every consistent path system in it has the corresponding property.
\end{definition}

For positive integers $a,b,c$, the theta graph $\Theta_{a,b,c}$ consists of two branch vertices $s,t$ and three internally disjoint $s$--$t$ paths, called arms, of lengths $a,b,c$.  We always reorder the parameters as $a\le b\le c$.  In the case $\Theta_{2,b,c}$ the length-two arm is written
\[
        A=s-a-t,
\]
and the two remaining arms are usually written $B$ and $C$.

\begin{figure}[t]
\centering
\begin{tikzpicture}[x=1cm,y=1cm,baseline=(current bounding box.center)]
  \node[circle,fill,inner sep=1.7pt,label=left:$s$] (s) at (0,0) {};
  \node[circle,fill,inner sep=1.7pt,label=right:$t$] (t) at (7,0) {};
  \draw[thick] (s) .. controls (2,2.1) and (5,2.1) .. node[above] {$A$ (length $a$)} (t);
  \draw[thick] (s) -- node[above] {$B$ (length $b$)} (t);
  \draw[thick] (s) .. controls (2,-2.1) and (5,-2.1) .. node[below] {$C$ (length $c$)} (t);
\end{tikzpicture}
\caption{The theta graph $\Theta_{a,b,c}$ and the arm notation used throughout.}
\label{fig:theta-notation}
\end{figure}

We use the following facts from Cizma and Linial \cite{CizmaLinial2022} and Chudnovsky, Cizma, and Linial \cite{ChudnovskyCizmaLinial2025}.

\begin{proposition}\label{prop:CLfacts}
The following hold.
\begin{enumerate}[label=\textup{(\roman*)}]
\item Cycles are strictly metrizable.
\item Outerplanar graphs are strictly metrizable.
\item Metrizability is closed downward under topological minors.  Equivalently for our purposes, if a graph contains a subdivision of a non-metrizable graph, then it is non-metrizable.
\item Strict metrizability is closed downward under topological minors, and $\Theta_{3,3,3}$ is not strictly metrizable \cite[Fig.~13(9)]{ChudnovskyCizmaLinial2025}.
\end{enumerate}
\end{proposition}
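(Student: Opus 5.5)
Since Proposition~\ref{prop:CLfacts} only collects known results, the plan is to point each item to its source and to add the short deductions that turn the published statements into the exact forms used here. For (i) and (ii), I will cite the theorems of Cizma and Linial \cite{CizmaLinial2022} that cycles and outerplanar graphs are strictly metrizable. Item (i) also follows from (ii), because every cycle is outerplanar. I will still record (i) separately, since the later cycle-cut realization lemma applies it directly.

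For (iii), I will cite the downward closure of metrizability under topological minors from \cite{CizmaLinial2022}, and then prove the reformulation stated after ``Equivalently''. Suppose $G$ contains a subgraph $H'$ that is a subdivision of a non-metrizable graph $H$. Deleting the edges and vertices of $G$ outside $H'$, and then suppressing the subdivision vertices of degree two, shows that $H$ is a topological minor of $G$. If $G$ were metrizable, closure would make $H$ metrizable, which is a contradiction.

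The one point needing care is that the closure statement is about graphs: every consistent path system on the minor must extend to a consistent system on the larger graph. The extension is done by routing paths through the subdivided edges, and by using outside vertices only along forced attachments. This extension argument belongs to \cite{CizmaLinial2022}, and I will cite it rather than reprove it.

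For (iv), I will cite Chudnovsky, Cizma and Linial \cite{ChudnovskyCizmaLinial2025}. They prove that strict metrizability is minor-closed, and in particular closed under topological minors, since every topological minor is a minor. They also list $\Theta_{3,3,3}$ among the obstructions in Fig.~13(9). Together these give (iv).

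I expect no real obstacle, because nothing here is new mathematics. The only step to check is that the definitions of consistency, neighborliness and (strict) induction given above agree with those in the cited papers, including the identification of unordered pairs with paths taken in either orientation. Once that is confirmed, the cited theorems apply verbatim.
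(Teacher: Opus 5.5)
Your proposal is correct and agrees with the paper, which gives no proof of its own here and simply imports these four facts from \cite{CizmaLinial2022} and \cite{ChudnovskyCizmaLinial2025}; your added deductions are all sound, namely (i) from (ii), the subdivision reformulation in (iii), and ``topological minor implies minor'' in (iv). One wording point in your sketch of the closure argument should be tightened: the cited extension must show that a \emph{non-metrizable} consistent system on the minor extends to a non-metrizable consistent system on the larger graph, which is stronger than extending to some consistent system.
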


We also use the cycle structure theorem of Cizma and Linial: after all persistent edges are contracted, every non-trivial consistent path system on a cycle becomes the standard geodesic system on an odd cycle \cite[Proposition 4.8]{CizmaLinial2022}.  Here the \emph{standard geodesic system} on an odd cycle means the system that chooses, for every pair of vertices, the shorter of the two cyclic arcs; on an odd cycle this shorter arc is unique.  The trivial cycle systems are those supported on a single spanning path of the cycle; they will not arise in the application of Lemma~\ref{lem:cycle_cut}, because there the restricted cycle system is neighborly.

The next reductions isolate the only theta path systems that require new work.

\begin{proposition}\label{prop:nonneighborly}
Let $G=\Theta_{a,b,c}$.  Every non-neighborly consistent path system on $G$ is strictly metrizable.
\end{proposition}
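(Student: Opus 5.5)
Since the system is non-neighborly, there is an edge $e=xy\in E(G)$ with $P_{x,y}\neq xy$. We first show that no prescribed path uses $e$. If some $P_{u,v}$ contained $e$, then $x$ and $y$ would both lie on $P_{u,v}$. Consistency would give $P_{x,y}=P_{u,v}[x,y]$. But this subpath is the single edge $xy$, a contradiction. Hence every path in $\cP$ lies in $G'=G-e$. Deleting an edge from an arm of a theta graph leaves a cycle (the union of the other two arms) with a path attached at its ends. More precisely, $G'$ is the cycle formed by the two intact arms, plus two pendant paths hanging from $s$ and $t$. Some of these pendant paths may be empty, for instance if $e$ is incident to $s$ or $t$. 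Such a graph is outerplanar.

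Next we check that $\cP$ is a consistent path system on $G'$. Every pair of vertices still has its prescribed path. All these paths lie in $G'$, and consistency is a property of the paths alone, so it is inherited. There is one subtlety: $G'$ must be connected, which holds, and $\cP$ must index every pair of $V(G')=V(G)$, which it does. By Proposition~\ref{prop:CLfacts}(ii), $G'$ is strictly metrizable. So there is a weighting $w':E(G')\to(0,\infty)$ under which each $P_{u,v}$ is the unique $w'$-shortest $u$--$v$ path in $G'$.

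Finally we extend $w'$ to $G$ by giving $e$ a very large weight. Set
\[
        w(e)=1+\sum_{f\in E(G')}w'(f),
\]
and $w=w'$ elsewhere. Any simple $u$--$v$ path in $G$ that uses $e$ has $w$-length exceeding the total weight of $G'$. Each prescribed path lies in $G'$, so its $w$-length is at most that total. Therefore paths through $e$ are strictly longer than $P_{u,v}$. The $u$--$v$ paths avoiding $e$ are exactly the paths in $G'$, where $P_{u,v}$ is already the unique shortest. Hence $w$ strictly induces $\cP$.

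There is no real obstacle here. The only points needing care are the first step, that consistency forbids any prescribed path from using the non-prescribed edge, and confirming that $G-e$ is outerplanar in all cases. The latter includes degenerate cases such as $a=1$ with $e$ equal to the arm $st$ itself, where $G-e$ is just a cycle, which is also covered by Proposition~\ref{prop:CLfacts}(i).
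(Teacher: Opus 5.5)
Your proposal is correct and follows essentially the same route as the paper: consistency forbids any prescribed path from using the edge $e=xy$ with $P_{x,y}\ne xy$, so $\cP$ is a consistent path system on the outerplanar graph $G-e$ and is strictly induced there, and giving $e$ a weight larger than the total weight of $G-e$ keeps every prescribed path the unique shortest path. Your extra checks (that $G-e$ is connected, and the degenerate case $e=st$ when $a=1$) only add detail to that argument.
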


\begin{proof}
Let $\cP$ be non-neighborly.  Choose an edge $e=xy$ for which $P_{x,y}\ne xy$.  No path of $\cP$ can use $e$: if $e$ appeared in some $P_{u,v}$, then the $x$--$y$ subpath of $P_{u,v}$ would be the edge $xy$, and consistency would force $P_{x,y}=xy$.

Thus all paths of $\cP$ lie in $G-e$.  The graph $G-e$ is outerplanar, so by Proposition~\ref{prop:CLfacts} it has a positive edge weighting $w_0$ strictly inducing $\cP$ as a path system on $G-e$.  Extend $w_0$ to $G$ by assigning $e$ a weight larger than the total $w_0$-weight of all edges in $G-e$.  Every alternative using $e$ is then strictly longer than the prescribed path, while strict comparisons inside $G-e$ are unchanged.  Hence the extended weighting strictly induces $\cP$ on $G$.
\end{proof}

\begin{corollary}\label{cor:neighborly_only}
Any non-metrizable path system on a theta graph is neighborly.
\end{corollary}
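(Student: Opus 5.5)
This is the contrapositive of Proposition~\ref{prop:nonneighborly}, so the plan is a one-step deduction. Let $\cP$ be a consistent path system on a theta graph $G=\Theta_{a,b,c}$, and suppose $\cP$ is not neighborly. Proposition~\ref{prop:nonneighborly} then says $\cP$ is strictly metrizable. So some positive weighting $w$ makes every $P_{u,v}$ the unique $w$-shortest $u$--$v$ path.

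A unique shortest path is in particular a shortest path. Hence the same $w$ induces $\cP$ in the ordinary sense, and $\cP$ is metrizable. Taking the contrapositive, a consistent system that is not metrizable, or even one that is not strictly metrizable, must be neighborly.

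I would note in the write-up that "path system" in the statement means a consistent one, since the graph properties are defined only over consistent systems. This is also the only setting in which Proposition~\ref{prop:nonneighborly} applies. I would also add that the corollary holds in the stronger strict form, because that is the version needed later to restrict attention to neighborly systems on $\Theta_{2,b,c}$.

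I see no real obstacle here. The only care needed is the implication from strict to ordinary inducing, and it follows at once from the definitions.
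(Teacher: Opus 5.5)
Your proposal is correct and matches the paper. The paper states this corollary without a separate proof, as the immediate contrapositive of Proposition~\ref{prop:nonneighborly} together with the fact that a strictly inducing weighting also induces the system. Your caveat that ``path system'' must mean a consistent path system is apt, and the strict form you mention is the one Theorem~\ref{thm:theta2bc} actually invokes.
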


\begin{proposition}\label{prop:nonmetro_side}
Let $a\le b\le c$.  If $a\ge3$, $b\ge3$, and $c\ge4$, then $\Theta_{a,b,c}$ is non-metrizable.
\end{proposition}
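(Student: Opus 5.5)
The plan is to reduce the statement to the single known obstruction $\Theta_{3,3,4}$ of Cizma and Linial, using the topological-minor closure of Proposition~\ref{prop:CLfacts}(iii).

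Since metrizability is closed downward under topological minors, a graph containing a subdivision of a non-metrizable graph is itself non-metrizable. It therefore suffices to show that, whenever $3\le a\le b\le c$ with $c\ge4$, the graph $\Theta_{a,b,c}$ contains a subdivision of $\Theta_{3,3,4}$.

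This subdivision comes directly from the arm structure. Subdividing an edge of an arm increases that arm's length by one and leaves the other arms and the branch vertices $s,t$ unchanged. Starting from $\Theta_{3,3,4}$, we subdivide:
\begin{itemize}
\item the first arm $a-3$ times,
\item the second arm $b-3$ times,
\item the third arm $c-4$ times.
\end{itemize}
All three counts are nonnegative because $a,b\ge3$ and $c\ge4$. The result is exactly $\Theta_{a,b,c}$, which is therefore a subdivision of $\Theta_{3,3,4}$.

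The ordering $a\le b\le c$ is used only to place the longest arm opposite the length-$4$ arm of the obstruction. Without that ordering, one would also have to rule out the case where the arm of length at least $4$ is not the third parameter.

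Combining this with the non-metrizability of $\Theta_{3,3,4}$ \cite{CizmaLinial2022} and Proposition~\ref{prop:CLfacts}(iii) proves the statement.

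There is essentially no obstacle here. The only point needing care is confirming that the form of topological-minor closure being invoked covers passing from a graph to its subdivisions, i.e.\ that a subdivision of a non-metrizable graph is non-metrizable. That is precisely the reformulation recorded in Proposition~\ref{prop:CLfacts}(iii), so it may be assumed.
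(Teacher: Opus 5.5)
Your proposal is correct and follows the same argument as the paper: you observe that $\Theta_{a,b,c}$ is a subdivision of $\Theta_{3,3,4}$, and then invoke the non-metrizability of $\Theta_{3,3,4}$ together with topological-minor closure (Proposition~\ref{prop:CLfacts}(iii)). Your explicit count of $a-3$, $b-3$, and $c-4$ arm subdivisions simply spells out the containment that the paper states in one line.
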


\begin{proof}
The graph $\Theta_{a,b,c}$ contains a subdivision of $\Theta_{3,3,4}$.  The graph $\Theta_{3,3,4}$ is non-metrizable by the certificate displayed as Fig. 22k in \cite{CizmaLinial2022}.  The conclusion follows from topological-minor closure, Proposition~\ref{prop:CLfacts}(iii).
\end{proof}

For reference, we record the local algebraic form of the obstruction.

\begin{proposition}\label{prop:sixblock}
Suppose a graph contains internally disjoint subpaths
\[
 A:s-a_1-a_2-t,\qquad
 B:s-b_1-b_2-t,
 \qquad
 C:s-c_1-c_2-c_3-t.
\]
If a path system contains the six prescribed paths
\begin{align*}
P_{a_1,b_2}&=a_1s b_1b_2,&
P_{a_2,b_1}&=a_2t b_2b_1,\\
P_{a_1,c_3}&=a_1a_2t c_3,&
P_{a_2,c_2}&=a_2a_1s c_1c_2,\\
P_{b_1,c_3}&=b_1s c_1c_2c_3,&
P_{b_2,c_1}&=b_2t c_3c_2c_1,
\end{align*}
then no positive edge weighting can induce all six paths.
\end{proposition}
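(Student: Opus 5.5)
The plan is to turn each of the six prescribed paths into a linear inequality by comparing it with the other route between its endpoints inside the displayed subgraph $A\cup B\cup C$. Then I would add the inequalities in pairs so that almost every edge length cancels. Suppose, for a contradiction, that a positive weighting $w$ induces all six paths. Each prescribed path is then no longer than any other path with the same endpoints. In particular it is no longer than the complementary route through the cycle formed by the two arms it touches. These routes exist because $A,B,C$ are internally disjoint subpaths of the graph. Other edges of the ambient graph play no role, since we only need the inequalities ``prescribed $\le$ this specific alternative''.

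\emph{Step 1: the $A$--$B$ pair.} Compare $P_{a_1,b_2}=a_1sb_1b_2$ with $a_1a_2tb_2$, and compare $P_{a_2,b_1}=a_2tb_2b_1$ with $a_2a_1sb_1$. In the sum of the two inequalities the terms $w(sa_1),w(sb_1),w(a_2t),w(tb_2)$ cancel, leaving
\[
w(b_1b_2)\le w(a_1a_2).
\]

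\emph{Step 2: the $A$--$C$ pair.} Compare $P_{a_1,c_3}=a_1a_2tc_3$ with $a_1sc_1c_2c_3$, and compare $P_{a_2,c_2}=a_2a_1sc_1c_2$ with $a_2tc_3c_2$. After adding, the terms $w(a_2t),w(tc_3),w(a_1s),w(sc_1),w(c_1c_2)$ cancel, giving
\[
w(a_1a_2)\le w(c_2c_3).
\]

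\emph{Step 3: the $B$--$C$ pair.} Compare $P_{b_1,c_3}=b_1sc_1c_2c_3$ with $b_1b_2tc_3$, and compare $P_{b_2,c_1}=b_2tc_3c_2c_1$ with $b_2b_1sc_1$. Adding cancels $w(sb_1),w(sc_1),w(b_2t),w(tc_3)$. What remains is
\[
w(c_1c_2)+w(c_2c_3)\le w(b_1b_2).
\]

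\emph{Step 4: chain the three inequalities.} Combining Steps 1--3 gives
\[
w(c_1c_2)+w(c_2c_3)\le w(b_1b_2)\le w(a_1a_2)\le w(c_2c_3),
\]
so $w(c_1c_2)\le 0$. This contradicts positivity of $w$.

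The only delicate point is bookkeeping: one must choose, for each prescribed path, the alternative route so that the pairwise sums cancel cleanly. Choosing the complementary arc of the relevant cycle does this. The asymmetry that $C$ has one more internal edge than $A$ and $B$ is exactly what leaves the uncancelled positive term $w(c_1c_2)$.
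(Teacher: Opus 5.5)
Your proof is correct and is essentially the paper's argument: the paper compares each prescribed path with the complementary route in the relevant two-arm cycle and adds all six inequalities to get $2w(c_1c_2)\le 0$, which is exactly your three pairwise sums $w(b_1b_2)\le w(a_1a_2)$, $w(a_1a_2)\le w(c_2c_3)$, and $w(c_1c_2)+w(c_2c_3)\le w(b_1b_2)$ chained together. Grouping the inequalities in pairs, as you do, only makes the same cancellation more transparent.
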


\begin{proof}
Write
\begin{align*}
&A_1=w(sa_1),\quad A_2=w(a_1a_2),\quad A_3=w(a_2t),\\
&B_1=w(sb_1),\quad B_2=w(b_1b_2),\quad B_3=w(b_2t),\\
&C_1=w(sc_1),\quad C_2=w(c_1c_2),\quad C_3=w(c_2c_3),\quad C_4=w(c_3t).
\end{align*}
If the six displayed paths are geodesics, then comparison with the natural alternative on the same endpoints gives
\begin{align*}
A_1+B_1+B_2 &\le A_2+A_3+B_3,\\
A_3+B_3+B_2 &\le A_2+A_1+B_1,\\
A_2+A_3+C_4 &\le A_1+C_1+C_2+C_3,\\
A_2+A_1+C_1+C_2 &\le A_3+C_4+C_3,\\
B_1+C_1+C_2+C_3 &\le B_2+B_3+C_4,\\
B_3+C_4+C_3+C_2 &\le B_2+B_1+C_1.
\end{align*}
Adding these inequalities and cancelling common terms yields $2C_2\le0$, impossible for positive edge weights.
\end{proof}

\section{Cycle cuts and Ferrers representations}

This section contains the two auxiliary realization tools used in the positive part of the classification.  The cycle-cut lemma handles the case in which the length-two arm is not the prescribed $s$--$t$ path.  The Ferrers lemmas handle the remaining case $P_{s,t}=s-a-t$ by converting monotone choices of paths into threshold inequalities.

We recall precisely the contraction terminology needed below.  For a vertex $v$, let $T_v$ be the union of all prescribed paths of $\cQ$ having $v$ as an endpoint.  Consistency makes $T_v$ a spanning tree; since $H$ is a cycle, $T_v=H-f(v)$ for a unique edge $f(v)$.  An edge is \emph{$\cQ$-persistent} if it belongs to $T_v$ for every vertex $v$.  Persistent edges may be contracted successively, and the induced quotient path system is consistent.  If $F$ is the set of all persistent edges of a non-trivial path system on a cycle, then $H/F$ is an odd cycle and $\cQ/F$ is its standard shorter-arc system \cite[Section~4.1 and Proposition~4.8]{CizmaLinial2022}.  Conversely, a strict realization of a quotient by a persistent edge expands to a strict realization before contraction \cite[Lemma~4.6]{CizmaLinial2022}.  A \emph{persistent component} is a maximal path of consecutive persistent edges.  These facts are applied only to cycles, where every component and every quotient is unambiguous.

\begin{lemma}\label{lem:persistent_expansion}
Let $H$ be a cycle with distinct vertices $s,t$, let $\cQ$ be a non-trivial consistent path system on $H$, and let $S\subseteq V(H)$ satisfy the following compatibility conditions:
\begin{enumerate}[label=\textup{(\alph*)}]
\item $s\in S$ and $t\notin S$;
\item on each of the two $s$--$t$ arcs of $H$, the set $S$ is a prefix starting at $s$;
\item if $v\notin S$, then the prescribed $t$--$v$ path in $\cQ$ contains no vertex of $S$.
\end{enumerate}
Then, for the purpose of constructing a weighting that both induces $\cQ$ and strictly separates $S$ from its complement by a threshold in the values
\[
        d(s,v)-d(t,v),
\]
the Cizma--Linial contraction of persistent components may be performed in a cut-respecting way.  More precisely, because $S$ is a prefix on each of the two $s$--$t$ arcs, there are at most two cycle edges with one endpoint in $S$ and the other in $V(H)\setminus S$.  Contract persistent components as usual except at these boundary edges: whenever a persistent component contains such a boundary edge $uv$, contract the maximal persistent subpath on the $S$ side of $uv$ and the maximal persistent subpath on the complementary side separately, and retain the edge $uv$ between the two contracted vertices.  Components not meeting a boundary edge are contracted in the ordinary way.  The resulting quotient is a cycle which, after suppressing at most two retained boundary subdivision edges, is the standard odd-cycle quotient of Cizma--Linial.  Its path system is the corresponding subdivision of the standard odd-cycle system.

Suppose this cut-respecting quotient has a positive edge weighting $w^\sharp$ which strictly induces its quotient path system and, for some real number $h$, satisfies
\[
        d_{w^\sharp}(s,v)-d_{w^\sharp}(t,v)<h
        \quad (v\in S^\sharp),
\]
\[
        d_{w^\sharp}(s,v)-d_{w^\sharp}(t,v)>h
        \quad (v\notin S^\sharp),
\]
where $S^\sharp$ is the image of $S$ in the cut-respecting quotient.  Then $w^\sharp$ expands to a positive edge weighting $w$ of the original cycle $H$ which strictly induces $\cQ$ and satisfies the same two strict threshold inequalities for all original vertices.
\end{lemma}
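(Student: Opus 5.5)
The plan is to reverse the cut-respecting contraction one persistent edge at a time. At each step the re-inserted edge gets a tiny weight $\varepsilon>0$, and the finitely many strict inequalities already holding in the quotient survive by continuity. Before that, I would justify the structural description of the quotient. Persistent edges stay persistent after other persistent edges are contracted, since $T_v$ only shrinks by contracted edges. So contracting the persistent components, minus the at most two boundary edges, is a sequence of single persistent-edge contractions. If we also contracted the retained boundary edges, we would obtain the standard odd-cycle quotient of \cite[Proposition~4.8]{CizmaLinial2022}. Leaving them uncontracted therefore yields a cycle whose suppression of those degree-two subdivision edges is the standard odd cycle, with path system the corresponding subdivided system. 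Condition (b) guarantees that there are at most two boundary edges. Condition (a) guarantees that $s$ and $t$ are never identified. The cut-respecting rule guarantees that no vertex of $S$ is ever merged with a vertex outside $S$, so $S^\sharp$ is well defined and $v\in S$ if and only if its image lies in $S^\sharp$. Condition (c) together with (b) I would use only to confirm that each retained boundary edge $uv$ is consistent with the cut, i.e.\ that prescribed $t$-paths to vertices outside $S$ never cross into $S$. This is what allows the threshold to separate the two contracted blobs on either side of $uv$.

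The expansion step is the following. Suppose $H'$ is obtained from $H''$ by uncontracting one persistent edge $e=xy$ into the vertex $z$ of $H''$, and $w''$ strictly induces $\cQ''$ and satisfies the strict thresholds. Define $w'$ to agree with $w''$ on the old edges and set $w'(e)=\varepsilon$. For any pair of vertices of $H'$ other than $\{x,y\}$, the two arcs between them map to the two arcs in $H''$ between their images. Their $w'$-lengths differ from the $w''$-lengths of the image arcs by at most $\varepsilon$. Persistence ensures that the prescribed path in $\cQ'$ maps onto the prescribed path in $\cQ''$, since contraction of a persistent edge commutes with taking prescribed paths \cite[Lemma~4.6]{CizmaLinial2022}. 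There is one degenerate case: both images may equal $z$ only when the pair is $\{x,y\}$. In that case $e\in T_x$ forces $P_{x,y}=e$, and for $\varepsilon$ below the total weight of the rest of the cycle this path is the unique shortest.

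Next I would handle the thresholds. For each vertex $v$ of $H'$, the quantity $d_{w'}(s,v)-d_{w'}(t,v)$ differs from the corresponding quotient value at the image of $v$ by at most $2\varepsilon$. Let $\delta$ be the minimum over the finitely many strict gaps, namely the differences between the two arc lengths for every pair and the distances from the threshold values to $h$. Choosing $\varepsilon<\delta/4$ preserves every strict inequality. Iterating over all contracted edges gives the weighting $w$ on $H$.

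The main obstacle I anticipate is the first paragraph rather than the perturbation argument. One must verify that, when a persistent component is split at a boundary edge, the retained edge $uv$ and the two separately contracted pieces still form a legitimate intermediate stage of the Cizma--Linial contraction. One must also check that the commutation property of \cite[Lemma~4.6]{CizmaLinial2022} applies at every step with the edge $uv$ left in place. I would attempt this by first contracting all non-boundary persistent edges in an arbitrary order, observing that $uv$ remains persistent throughout, and then noting that the hypothesis on $w^\sharp$ concerns exactly this intermediate cycle. Consequently the expansion argument never needs to contract or uncontract $uv$ at all.
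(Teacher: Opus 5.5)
Your proposal is correct and follows essentially the paper's argument: contracting only the non-boundary persistent edges keeps every quotient vertex on one side of the cut, and the expansion gives the contracted edges sufficiently small positive weights so that the finitely many strict geodesic and threshold inequalities survive. The only difference is cosmetic. You uncontract one edge at a time with a fresh slack bound $\varepsilon<\delta/4$ and treat the pair $\{x,y\}$ explicitly, whereas the paper inserts all the small weights at once with total weight below $\gamma/10$; your use of condition (c) is not actually needed for the expansion.
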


\begin{proof}
The first assertions are consequences of the prefix assumptions.  The set $S$ has at most one boundary edge on each of the two $s$--$t$ arcs, hence at most two boundary edges in the whole cycle.  Splitting a persistent component only at these boundary edges leaves every contracted piece on a single side of the cut and keeps each boundary edge as an explicit edge of the quotient.  Suppressing the retained boundary edges recovers exactly the usual persistent-edge quotient of the cycle system; before suppression, the quotient is just a subdivision of that odd-cycle quotient at at most two places.

It remains to justify the expansion step.  The ordinary persistent-edge expansion lemma of Cizma and Linial \cite[Lemma 4.6]{CizmaLinial2022} says that contracted persistent components may be replaced by sufficiently small positive edge weights without changing the induced cycle system.  We need only record that the same small expansion can be chosen to preserve the cut inequalities.

Indeed, on the cut-respecting quotient all relevant inequalities are strict and finite in number.  They are of two types:
\begin{enumerate}[label=\textup{(\roman*)}]
\item for every pair of quotient vertices $x,y$ and every non-prescribed simple $x$--$y$ path $R$,
\[
        w^\sharp(Q_{x,y}^\sharp)<w^\sharp(R);
\]
\item for every quotient vertex $v$,
\[
        d_{w^\sharp}(s,v)-d_{w^\sharp}(t,v)<h
        \quad\text{if }v\in S^\sharp,
\]
\[
        d_{w^\sharp}(s,v)-d_{w^\sharp}(t,v)>h
        \quad\text{if }v\notin S^\sharp.
\]
\end{enumerate}
Let $\gamma>0$ be smaller than the minimum slack in these inequalities.  Expand every contracted component by assigning positive edge weights whose total weight is less than $\gamma/(10|V(H)|)$.  The sum of all inserted weights is then below $\gamma/10$.  Consequently every simple path length changes by less than $\gamma/10$, every distance changes by less than $\gamma/10$, and every distance difference changes by less than $\gamma/5$.  Hence the strict inequalities involving quotient vertices persist.

For vertices internal to an expanded component contained wholly in one side of the cut, the value of $d(s,v)-d(t,v)$ differs from the value at the contracted quotient vertex by less than $\gamma/5$, so the vertex remains on the same side of the threshold.  For a crossing persistent component, the boundary edge was retained in the quotient and already separates an $S$-side endpoint from a complementary endpoint.  Only the two one-sided pieces adjacent to that boundary edge are expanded, and the same smallness estimate keeps all internal vertices of those pieces on their prescribed sides.  Thus both the strict geodesic inequalities and the cut inequalities survive the expansion.  The resulting positive weighting of $H$ strictly induces $\cQ$ and has the required strict threshold separation.  If $w^\sharp$ and $h$ are rational, the sufficiently small inserted weights may be chosen rational as well.
\end{proof}

\begin{lemma}\label{lem:cycle_cut}
Let $H$ be a cycle with two distinct vertices $s,t$, and let $\cQ$ be a neighborly consistent path system on $H$.  Suppose that a new vertex $a$ is joined to $s$ and $t$, and that $\cQ$ is the restriction of a consistent extension with $P_{a,s}=as$ and $P_{a,t}=at$.  Assume further that each $a$--$v$ path, $v\in V(H)$, begins either with $as$ or with $at$.  Then there is a positive rational edge weighting $w_H$ strictly inducing $\cQ$ and a rational number $h$ such that
\[
 P_{a,v}\text{ begins with }as
 \quad\Longrightarrow\quad
 d_{w_H}(s,v)-d_{w_H}(t,v)<h,
\]
\[
 P_{a,v}\text{ begins with }at
 \quad\Longrightarrow\quad
 d_{w_H}(s,v)-d_{w_H}(t,v)>h.
\]
\end{lemma}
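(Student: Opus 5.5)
The plan is to reduce to Lemma~\ref{lem:persistent_expansion} with $S=\{v\in V(H): P_{a,v}\text{ begins with }as\}$, and then to solve the quotient problem explicitly on a subdivided standard odd cycle.

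\textbf{Step 1: check hypotheses (a)--(c).} Since $P_{a,s}=as$ and $P_{a,t}=at$, we have $s\in S$ and $t\notin S$. For (b), let $v$ lie on an $s$--$t$ arc with $v\in S$. The path $P_{a,v}=a\,s\cdots v$ contains a subpath $s\cdots v$ inside $H$. This subpath cannot pass through $t$: otherwise consistency would give $P_{a,t}$ as a subpath starting with $as$, which is false. Hence it is the arc segment from $s$ to $v$, and every vertex $u$ on that segment has $P_{a,u}$ as a subpath of $P_{a,v}$. So $u\in S$, and $S$ is a prefix on each arc. For (c), if $v\notin S$ then $P_{a,v}=a\,t\cdots v$, and its subpath $P_{t,v}$ avoids $s$. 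If $P_{t,v}$ met some $u\in S$, consistency would force $P_{a,u}$ to begin with $at$, a contradiction. Finally, $\cQ$ is neighborly, hence non-trivial: a trivial cycle system avoids an edge of $H$. So Lemma~\ref{lem:persistent_expansion} applies, and it suffices to produce $w^\sharp$ and $h$ on the cut-respecting quotient.

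\textbf{Step 2: the quotient.} The quotient is an odd cycle $C_{2k+1}$ carrying the standard shorter-arc system, subdivided by at most two retained boundary edges. The image $S^\sharp$ is still a contiguous arc containing $s$ and avoiding $t$, and it inherits (c). Take $w^\sharp$ to be $1$ on the original odd-cycle edges and a small $\varepsilon$ on each subdivision edge, then perturb. For $\varepsilon$ small, a weighted cycle whose prescribed arcs are exactly those of weight less than half the total strictly induces the system, provided no arc has weight exactly half. One must also check that the subdivided system is the one obtained by splitting the corresponding prescribed arcs.

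The function $f(v)=d(s,v)-d(t,v)$ is, along each $s$--$t$ arc, piecewise linear in arc position: it has slope $+2$ where both geodesics run along the arc in the same direction, and slope $0$ past the antipode of $s$ or of $t$. Hence $f$ is weakly monotone along each arc.

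\textbf{Step 3: the main obstacle.} I need strict separation at the cut, and here (c) is used in the quotient. On each arc, let $u\in S^\sharp$ and $v\notin S^\sharp$ be the two endpoints of the boundary edge $uv$. Condition (c) says $P_{t,v}$ avoids $u$. The threshold cut is placed in the middle of the retained edge $uv$, which has length $\varepsilon$. The danger is a flat region where $f(u)=f(v)$, which happens when the shorter arcs from $s$ and from $t$ both go the same way around.

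To remove it, I will adjust weights locally. Increasing the weight of $uv$ slightly increases $f(v)-f(u)$ whenever the geodesic from $s$ to $v$ passes through $u$ and the geodesic from $t$ to $u$ does not pass through $v$. The second condition follows from (c) applied in reverse, using consistency of $P_{t,u}$. For the first condition I need that $P_{s,v}$ passes through $u$ for the boundary vertex $v$ on that arc. If instead $P_{s,v}$ goes around through $t$, I would combine this with (c) and consistency to derive a contradiction.

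I would resolve this with a case check on where the antipodes of $s$ and $t$ fall relative to the two boundary edges. Then I choose the boundary edge weights, and if needed a second small generic perturbation, so that $\max_{S^\sharp} f<\min_{\overline{S^\sharp}} f$, using the monotonicity of $f$ on each arc. Rationality is preserved throughout, and Lemma~\ref{lem:persistent_expansion} finishes the proof.
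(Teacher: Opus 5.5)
Step 1 is fine. Step 3 has a genuine gap: it targets the wrong obstacle, and the proposed fix provably cannot work.

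\textbf{Four inequalities are needed.} Let $x^-x^+$ and $y^-y^+$ be the short-arc and long-arc boundary edges, with $x^-,y^-\in S^\sharp$. Since your $f=d(s,\cdot)-d(t,\cdot)$ is weakly monotone along each $s$--$t$ arc, strict separation is equivalent to four inequalities:
\begin{itemize}
\item the two local gaps $f(x^+)>f(x^-)$ and $f(y^+)>f(y^-)$;
\item the two \emph{cross} gaps $f(y^+)>f(x^-)$ and $f(x^+)>f(y^-)$.
\end{itemize}
Once one knows where the boundaries can sit, the local gaps are automatically positive under your starting weights. Your plan never addresses the cross gaps.

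Also, a boundary edge is a retained edge of small length only when it lies in a persistent component. In general it is an ordinary quotient edge of the odd cycle.

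\textbf{A concrete failure.} Take $\Theta_{2,5,16}$ with the standard shorter-arc system on the $21$-cycle $H$. Put $s=0$, $t=5$, and write $e_k$ for the edge $k(k+1)$. There are no persistent edges, so $H$ is its own quotient and there are no retained edges. Let $S=\{0,\dots,4\}\cup\{14,\dots,20\}$. Setting $P_{a,v}=as\,Q_{s,v}$ for $v\in S$ and $P_{a,v}=at\,Q_{t,v}$ otherwise gives a consistent extension, so this is a valid instance.

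With unit weights, $f(4)=3$ but $f(13)=0$, while the local gaps are $f(5)-f(4)=2$ and $f(13)-f(14)=2$. Now change only the boundary edges: $w(e_4)=1+\alpha$, $w(e_{13})=1+\beta$. Then $f(13)-f(4)=\alpha+\beta-3$. The prescribed $10$-edge arc from $4$ to $14$ contains both boundary edges, and its complement is an $11$-edge arc containing neither. So inducement forces $\alpha+\beta\le 1$, hence $f(13)-f(4)\le-2$. No small generic perturbation can repair this.

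\textbf{The missing idea.} You need a global reweighting that shifts the potentials by a definite amount while keeping the standard system (weakly) induced. The paper does this as follows.
\begin{itemize}
\item It uses the ray $R_k$, which puts weight $1$ on $e_k$ and on its antipodal edge $e_{k+n}$. Every prescribed arc contains at most one of these two edges, and every complementary arc contains at least one, so $R_k$ weakly induces the system.
\item For split boundary vertices it uses the split rays $U_b$.
\item It sets $w_0=B_s+B_\ell+\tfrac12R_0$, where $B_s,B_\ell$ are the rays at the two boundaries.
\item Using the admissible boundary ranges ($0\le q<d$, $n\le p\le n+d$, and so on), it checks from the incidence tables that all four gaps are at least $1$.
\item Only then does it add $\varepsilon u$ to obtain strictness.
\end{itemize}
In the example above, $w_0=R_4+R_{13}+\tfrac12R_0$ also loads $e_{14}$, $e_2$, $e_0$ and $e_{10}$, and it gives $f(13)-f(4)=1$. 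The paper also treats separately the case where $s$ and $t$ have the same quotient image, which your Step 2 does not distinguish.
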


\begin{proof}
Let
\[
        S=\{v\in V(H):P_{a,v}\text{ begins with }as\}.
\]
Consistency of the extension gives the precise cut conditions needed below.  First, $s\in S$ and $t\notin S$.  If $v\in S$, then
\[
        P_{a,v}=as\,Q_{s,v},
\]
and every vertex $u$ of $Q_{s,v}$ also lies in $S$, because the $a$--$u$ subpath of $P_{a,v}$ is $P_{a,u}$ and begins with $as$.  Thus the prescribed $s$--$v$ path stays in $S$.  Symmetrically, if $v\notin S$, then $P_{a,v}=at\,Q_{t,v}$ and the prescribed $t$--$v$ path stays in $V(H)\setminus S$.  In particular, both $H[S]$ and $H[V(H)\setminus S]$ are connected.  Since they are complementary nonempty connected vertex sets of a cycle, each is a cyclic interval.  Equivalently, on each of the two $s$--$t$ arcs of $H$, the set $S$ is a prefix starting at $s$, and the cut has exactly two boundary edges.

Form the cut-respecting quotient $H^\sharp$ described in
Lemma~\ref{lem:persistent_expansion}, and let
\[
        \bar\pi:H^\sharp\longrightarrow C_{2n+1}
\]
suppress its retained boundary edges.  The resulting cycle carries the
standard shorter-arc system.  We first record two points that are essential
when a boundary edge is persistent.

If $xy$ is persistent, then $T_x=T_y$.  Indeed, for every root $z$, the
edge $xy$ lies in the tree $T_z$, and the two paths $Q_{z,x}$ and $Q_{z,y}$
therefore differ in exactly the edge $xy$.  Taking their unions over $z$,
and using the neighborly path $Q_{x,y}=xy$, gives $T_x=T_y$.  Thus the
omitted-edge map $f$ is constant on every persistent component.

Moreover, the two boundary edges cannot belong to the same persistent
component.  Suppose that one persistent component joined both boundaries
through the $S$-arc.  That boundary-to-boundary chain lies in $T_t$.  The
two prescribed paths from $t$ to the complementary-side boundary endpoints
stay outside $S$ and together contain the complementary arc through $t$.
Their union with the persistent chain would contain the whole cycle,
contrary to the fact that $T_t$ is a tree.  If the two complementary-side
endpoints coincide, the persistent component itself is already the whole
cycle, giving the same contradiction.  The case of a component joining the
boundaries through the complementary arc is symmetric, using $T_s$.
Consequently each retained boundary splits a different quotient vertex,
and no quotient vertex has three copies in $H^\sharp$.

\medskip
\noindent\emph{Coincident root images.}
Suppose first that $\bar\pi(s)=\bar\pi(t)=b$.  The persistent component
containing $s$ and $t$ crosses the cut and hence contains a retained
boundary edge $g$.  By the preceding paragraph it contains only that one
boundary.  After its two one-sided pieces are contracted, $s=b_S$ and
$t=b_C$ are the two ends of $g$.  Constancy of $f$ on this component gives
\[
        f(s)=f(t)=e,
\]
where $e$ is the quotient edge antipodal to $b$ in the standard odd-cycle
system.  Hence
\[
        T_s=T_t=H^\sharp-e.
\]
The forest $T_s-g$ has two components, say $K_s$ containing $s$ and $K_t$
containing $t$.  The root-staying property proved above implies
\[
        S^\sharp=V(K_s),
        \qquad
        V(H^\sharp)\setminus S^\sharp=V(K_t);
\]
otherwise a prescribed path rooted at $s$ or at $t$ would cross to the
wrong side.  In particular, the other cut-boundary edge is the
nonpersistent edge $e$.

Give every edge of the suppressed odd cycle weight $1$ and give $g$ weight
$\eta=1/4$.  If two endpoints have distinct images under $\bar\pi$, their
prescribed lift contains at most $n$ quotient edges and at most $g$, while
the complementary arc contains at least $n+1$ quotient edges.  If their
images coincide, the prescribed path is the retained edge $g$ and the
other arc contains all $2n+1$ quotient edges.  Thus this weighting
$w^\sharp$ strictly induces the path system on $H^\sharp$.  Furthermore,
for
\[
        \phi(x)=d_{w^\sharp}(s,x)-d_{w^\sharp}(t,x),
\]
the identities $Q_{t,x}=gQ_{s,x}$ for $x\in S^\sharp$ and
$Q_{s,x}=gQ_{t,x}$ for $x\notin S^\sharp$ give
\[
        \phi(x)=-\eta\quad(x\in S^\sharp),
        \qquad
        \phi(x)=\eta\quad(x\notin S^\sharp).
\]
Taking $h=0$ and applying Lemma~\ref{lem:persistent_expansion} proves the
result in this case.

\medskip
\noindent\emph{Distinct root images.}
Assume now that $\bar\pi(s)$ and $\bar\pi(t)$ are distinct.  Label the
suppressed odd cycle clockwise by $0,1,\ldots,2n$, put
\[
        \bar\pi(s)=0,\qquad \bar\pi(t)=d,\qquad 1\le d\le n,
\]
and set $N=2n+1$ and $e_i=i(i+1)$, with indices modulo $N$.
There are two possible types for the boundary on the short $0$--$d$ arc:
\[
 E_q:\ q\in S^\sharp,\ q+1\notin S^\sharp,\quad 0\le q<d,
\]
or
\[
 V_b:\ b_S-g_b-b_C,\quad 0\le b\le d,
\]
where $g_b$ is retained, $b_S\in S^\sharp$, and
$b_C\notin S^\sharp$.  The endpoint convention is $0_S=s$ and $d_C=t$.
In clockwise order on the long arc, the two types are
\[
 E_p:\ p\notin S^\sharp,\ p+1\in S^\sharp,\quad n\le p\le n+d,
\]
or
\[
 V_c:\ c_C-g_c-c_S,\quad n+1\le c\le n+d.
\]
These ranges follow directly from the root-staying property.  On the short
arc, the selected $0$-rooted and $d$-rooted paths remain on their respective
sides precisely for $0\le b\le d$.  On the long arc, the selected
$0$-rooted path reaches the $S$-copy counterclockwise only when
$c\ge n+1$, while the selected $d$-rooted path reaches the complementary
copy clockwise only when $c\le n+d$.  The same calculation gives the
ranges for $q$ and $p$.  In particular, a split incident with a root can
occur only on the short arc.

For $k\in\mathbb Z/N\mathbb Z$, let $R_k$ assign weight $1$ to
$e_k,e_{k+n}$ and weight $0$ to every other edge.  If the quotient vertex
$b$ is split, define the split ray $U_b$ by
\[
        U_b(g_b)=U_b(e_{b+n})=1,
        \qquad
        U_b(e)=0\quad\text{on every other edge}.
\]
Each $R_k$ non-strictly induces the lifted standard system.  The same is
true of $U_b$: a selected lifted standard arc cannot contain both of its
unit marks, because after suppressing $g_b$ such an arc would contain at
least $n+1$ quotient edges, whereas a selected standard arc contains at
most $n$.  Its complementary arc contains at least one mark.  If the two
endpoints suppress to the same quotient vertex, the selected internal
split segment contains at most the retained mark and its complement
contains the antipodal quotient mark.  This also covers $b=0,d$.

Choose the two boundary rays
\[
 B_s=\begin{cases}R_q,&E_q,\\ U_b,&V_b,\end{cases}
 \qquad
 B_\ell=\begin{cases}R_p,&E_p,\\ U_c,&V_c,\end{cases}
\]
and set
\[
        B=B_s+B_\ell,\qquad Z=R_0,\qquad
        w_0=B+\frac12 Z.
\]
Thus $w_0$ is nonnegative and non-strictly induces the lifted standard
system.  Let $x^-\in S^\sharp,x^+\notin S^\sharp$ be the endpoints of the
short boundary, and let $y^-\in S^\sharp,y^+\notin S^\sharp$ be the
endpoints of the long boundary.  For any weighting $w$ that non-strictly
induces the system, evaluate distances on the prescribed lifts and put
\[
        \phi_w(v)=d_w(s,v)-d_w(t,v),
\]
\[
\begin{aligned}
 F_1(w)&=\phi_w(y^+)-\phi_w(x^-),&
 F_2(w)&=\phi_w(x^+)-\phi_w(y^-),\\
 L_s(w)&=\phi_w(x^+)-\phi_w(x^-),&
 L_\ell(w)&=\phi_w(y^+)-\phi_w(y^-).
\end{aligned}
\]
The complete boundary-incidence calculation is
\[
\begin{array}{c|c|c|c|c}
\text{short/long}&F_1(B)&F_1(Z)&F_2(B)&F_2(Z)\\ \hline
E/E&2&\{-2,0,2\}&\{0,2,4\}&2\\
V/E&2&\{-2,0,2\}&\{0,2,4\}&\{0,2\}\\
E/V&2&\{-2,0\}&\{2,4\}&2\\
V/V&2&\{-2,0\}&2&\{0,2\}.
\end{array}
\]
The two same-boundary gaps at $w_0$ are
\[
\begin{array}{c|c|c}
\text{short/long}&L_s(w_0)&L_\ell(w_0)\\ \hline
E/E&\{2,3,4,5\}&\{1,2,3,4\}\\
V/E&2&\{1,2,3,4\}\\
E/V&\{2,3,4,5\}&2\\
V/V&2&2.
\end{array}
\]
In the $V/E$ row the two set-valued entries must be read jointly.  If the
short boundary is $V_b$ and the long boundary is $E_p$, then
\[
 F_2(Z)=\begin{cases}0,&b=0,\\2,&b>0,\end{cases}
\]
whereas
\[
 F_2(B)=
 \begin{cases}
 2,&b=0,\\
 0,&b>0\text{ and }p=n,\\
 4,&b>0\text{ and }p=n+b,\\
 2,&\text{otherwise}.
 \end{cases}
\]
Thus a zero in one column is always coupled to a positive entry in the
other.

For completeness, we verify the tables rather than appealing to a picture.
For a quotient edge $e_k$, let
\[
 \chi_k(v)=
 \mathbf 1_{\{e_k\in Q^\sharp_{s,v}\}}
 -\mathbf 1_{\{e_k\in Q^\sharp_{t,v}\}}.
\]
At every quotient vertex that can occur as a boundary endpoint,
\[
\chi_k(v)=
\begin{cases}
\mathbf1_{\{0\le k<v\}}-\mathbf1_{\{v\le k<d\}},
   &0\le v\le d,\\
\mathbf1_{\{0\le k<d\}},&v=n,\\
\mathbf1_{\{v\le k\le2n\}}-\mathbf1_{\{d\le k<v\}},
   &n+1\le v\le n+d,\\
-\mathbf1_{\{0\le k<d\}},&v=n+d+1.
\end{cases}
\]
If the last vertex is $N$, it is read as $0$.  The contribution of $R_j$
to $\phi(v)$ is $\chi_j(v)+\chi_{j+n}(v)$.  The contribution of $U_b$ is
$\chi_{b+n}(v)$ plus
\[
 \mathbf1_{\{g_b\in Q^\sharp_{s,v}\}}
 -\mathbf1_{\{g_b\in Q^\sharp_{t,v}\}}.
\]
At the two copies of its own split vertex, the latter term is $-1$ on the
$S$-copy and $+1$ on the complementary copy.  Substituting
\[
 0\le q<d,\qquad 0\le b\le d,\qquad
 n\le p\le n+d,\qquad n+1\le c\le n+d
\]
first for the endpoints of each boundary and then for the crossed endpoint
pairs gives the two displayed tables.  Retaining the alternatives $b=0$,
$p=n$, and $p=n+b$ gives the stated joint $V/E$ values.  The calculation
also covers the smallest case $n=d=1$.

It follows that
\[
        F_1(w_0)\ge1,\qquad F_2(w_0)\ge1,\qquad
        L_s(w_0)\ge1,\qquad L_\ell(w_0)\ge1.
\]
The potential $\phi_{w_0}$ is weakly increasing from $s$ to $t$ along
either $s$--$t$ arc.  At an ordinary central step the $s$-rooted path gains
the traversed edge while the $t$-rooted path loses it; at an outer step both
rooted paths change by the same edge.  At either antipodal switch, the
triangle inequality gives the same nonnegative increment.  A retained
boundary step has positive increment by the last two inequalities above.
Consequently
\[
\begin{aligned}
 \max_{v\in S^\sharp}\phi_{w_0}(v)
 &=\max\{\phi_{w_0}(x^-),\phi_{w_0}(y^-)\}\\
 &<\min\{\phi_{w_0}(x^+),\phi_{w_0}(y^+)\}\\
 &=\min_{v\notin S^\sharp}\phi_{w_0}(v).
\end{aligned}
\]

It remains to make every edge weight positive and every prescribed arc
unique.  Let $u$ assign weight $1$ to each edge of the suppressed odd cycle
and weight $\eta=1/4$ to each retained boundary edge.  For endpoints with
distinct suppressed images, a prescribed lift has at most $n$ quotient
edges and at most two retained edges, whereas its complement has at least
$n+1$ quotient edges.  Hence
\[
        u(Q^\sharp)\le n+2\eta<n+1\le u(\overline{Q^\sharp}).
\]
For endpoints with the same suppressed image, the selected internal split
segment has weight at most $2\eta$, while the complementary arc contains
all $N$ quotient edges.  Thus $u$ is positive and strictly induces the
lifted standard system.  Since $w_0$ induces the same system with ties
allowed,
\[
        w^\sharp=w_0+\varepsilon u
\]
is positive and strict for every $\varepsilon>0$.  Choose a sufficiently
small positive rational $\varepsilon$ so that the strict cut separation
above persists, and choose a rational $h$ between its two sides.  Lemma~
\ref{lem:persistent_expansion} now expands the remaining one-sided
persistent pieces and yields the required $w_H$ and $h$.
\end{proof}

\begin{proposition}\label{prop:theta2_nonshort}
Let $G=\Theta_{2,b,c}$, and let its length-two arm be $A=s-a-t$.  If $\cP$ is neighborly and $P_{s,t}\ne A$, then $\cP$ is strictly metrizable.
\end{proposition}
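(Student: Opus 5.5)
The plan is to reduce to Lemma~\ref{lem:cycle_cut}, applied to the cycle $H=B\cup C$, which has length $b+c$ and contains $s$ and $t$. The vertex $a$ plays the role of the new vertex joined to $s$ and $t$. Since $\cP$ is neighborly, $P_{a,s}=as$ and $P_{a,t}=at$.

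\textbf{Step 1: $a$ is never an interior vertex of a prescribed path between two vertices of $H$.} Suppose $a$ were interior to some $P_{u,v}$ with $u,v\in V(H)$. Since $a$ has degree two, $P_{u,v}$ would contain the segment $s\,a\,t$. By consistency, $P_{s,t}=s\,a\,t=A$, contradicting the hypothesis. Hence every $P_{u,v}$ with $u,v\in V(H)$ lies in $H$. The restriction $\cQ$ of $\cP$ to $H$ is therefore a consistent path system on $H$, and it is neighborly because $\cP$ is.

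\textbf{Step 2: the hypotheses of Lemma~\ref{lem:cycle_cut} hold.} Every $a$--$v$ path with $v\in V(H)$ begins with either $as$ or $at$, since these are the only edges at $a$. The extension condition $P_{a,s}=as$, $P_{a,t}=at$ is exactly neighborliness. The lemma then supplies a positive rational weighting $w_H$ that strictly induces $\cQ$, together with a rational threshold $h$ satisfying the two displayed implications.

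\textbf{Step 3: define the weights on $A$.} Set $w(as)=\alpha$ and $w(at)=\beta$, with $\alpha,\beta>0$. We need the following comparisons to hold strictly.
\begin{enumerate}[label=\textup{(\roman*)}]
\item For $u,v\in H$, the competitors of $Q_{u,v}$ are the other arc of $H$ and paths through $a$. A path through $a$ uses $s\,a\,t$, so its length is at least $\alpha+\beta+d_{w_H}(s,t)$ beyond the $H$-part. We therefore want $\alpha+\beta> d_{w_H}(s,t)$. With this, any $H$-path $R$ through $a$ that replaces a $B$- or $C$-segment between $s$ and $t$ by $s\,a\,t$ is strictly longer than the corresponding path in $H$, which is in turn at least the length of $Q_{u,v}$ by strictness in $H$.
\item For $v\in H$, the path $P_{a,v}$ is either $as\,Q_{s,v}$ or $at\,Q_{t,v}$; this uses consistency, since the remainder is prescribed. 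The alternatives are the other starting edge followed by an $H$-path. These are dominated by the other starting edge followed by $Q_{t,v}$ (respectively $Q_{s,v}$), or strictly beaten by $Q$ itself. So we need $\alpha-\beta<d(t,v)-d(s,v)$ when $v\in S$, and the reverse when $v\notin S$. Equivalently, $d(s,v)-d(t,v)<\beta-\alpha$ on $S$ and $>\beta-\alpha$ off $S$.
\end{enumerate}
Choose $\beta-\alpha=h$ and $\alpha+\beta$ large, say $\alpha+\beta>|h|+w_H(H)$. Then both $\alpha$ and $\beta$ are positive rationals.

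\textbf{Step 4: pairs involving $a$ and a vertex of $H$, and alternatives leaving $H$ at a non-root vertex.} Any $a$--$v$ path goes through $s$ or $t$ and then follows an $H$-path. The $H$-part is strictly minimized by $Q$, except when it passes through the other root; that case is an $as$ path routed via $t$, which is longer than the $at$ start. We must also handle alternatives for $u,v\in H$ that pass through $a$ but cannot be rerouted within $H$, for example when $u$ and $v$ lie on different arms and the path goes $u\to s\to a\to t\to v$. Such a path has length at least $\alpha+\beta$, which exceeds $w_H(H)\ge w_H(Q_{u,v})$.

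The main obstacle is bookkeeping the alternative paths in (i) and (ii) cleanly. The substantive content, namely the cut threshold, is already provided by Lemma~\ref{lem:cycle_cut}.
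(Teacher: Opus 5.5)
Your proposal is correct and follows the paper's proof essentially verbatim: you restrict $\cP$ to the cycle $H=G-a$ (any path through $a$ contains $s\,a\,t$), invoke Lemma~\ref{lem:cycle_cut}, and set $w(at)-w(as)=h$ with $w(as)+w(at)$ exceeding the total $w_H$-weight of $H$; this is the paper's $w(as)=M$, $w(at)=M+h$. One small slip: in (i), the condition $\alpha+\beta>d_{w_H}(s,t)$ alone does not suffice for the rerouting argument, since the replaced $s$--$t$ segment may be the heavier arm; however, your actual choice $\alpha+\beta>|h|+w_H(H)$ and the bound in Step 4 already dispose of every route through $a$, exactly as in the paper.
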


\begin{proof}
Let $H=G-a$.  Then $H$ is the cycle formed by the two long arms.  No prescribed path whose endpoints lie in $H$ can pass through $a$, because such a path would contain $A$ as its $s$--$t$ subpath, forcing $P_{s,t}=A$ by consistency.  Therefore $\cP$ restricts to a cycle path system $\cQ$ on $H$.

Because $\cP$ is neighborly, the restricted cycle system $\cQ$ is neighborly and hence non-trivial.  Apply Lemma~\ref{lem:cycle_cut} to obtain $w_H$ and $h$.  Choose $M>|h|$ so large that no shortest path between two vertices of $H$ can improve by going through $a$, and set
\[
        w(as)=M,
        \qquad
        w(at)=M+h.
\]
Both weights are positive.  For $v\in V(H)$, the route from $a$ through $s$ is no longer than the route through $t$ exactly when
\[
        d_{w_H}(s,v)-d_{w_H}(t,v)\le h.
\]
The cut inequalities are strict, so every prescribed $a$--$v$ path is the unique shortest one of the two possible routes through $s$ and $t$.  Taking $M$ larger than the total $w_H$-weight of $H$ also makes every route through $a$ strictly longer than the prescribed path between two vertices of $H$.  Hence the resulting weighting strictly induces $\cP$.
\end{proof}

We now spell out the Ferrers terminology used in the proof of $\Theta_{2,b,c}$.  This is the threshold viewpoint on Ferrers or chain graphs, also known in an equivalent weighted form as difference graphs; see, for example, \cite{EhrenborgVanWilligenburg2004,HammerPeledSun1990}.

\begin{definition}\label{def:ferrers}
A \emph{chain} is a finite totally ordered set.  For the standard chain $I=\{1,\ldots,m\}$, a prefix is an initial interval $\{1,\ldots,p\}$, possibly empty or all of $I$; a suffix is a final interval $\{q,\ldots,m\}$, possibly empty or all of $I$.

Let $I$ and $J$ be chains.  A relation $F\subseteq I\times J$ is an \emph{additive Ferrers relation} if it is downward closed in both coordinates: whenever $(i,j)\in F$, $i'\le i$, and $j'\le j$, then $(i',j')\in F$.  Equivalently, the $0$--$1$ matrix of $F$ has left-justified rows and the row lengths are weakly decreasing as $i$ increases.  An \emph{additive threshold representation} of $F$ is a choice of real labels $x_i$ for $i\in I$ and $y_j$ for $j\in J$ such that membership in $F$ is determined by the sign of $x_i+y_j$.

For a single ordered arm $D:s=d_0,d_1,\ldots,d_m=t$, a \emph{potential} is simply a real label $X_i$ assigned to $d_i$.  In the construction below the potentials are strictly increasing along $D$, and the edge length of $d_{i-1}d_i$ is set to $(X_i-X_{i-1})/2$.  This choice is useful because the total length of $D$ is then $(X_m-X_0)/2$, and, writing $d_D$ for the path-length distance restricted to the arm $D$,
\[
        d_D(s,d_i)-d_D(t,d_i)=X_i
\]
when $X_0=-X_m$.  A relation $F\subseteq D^-\times D^+$, where $D^-$ is a prefix and $D^+$ is a suffix of $D$, is a \emph{difference Ferrers relation} if it is downward closed in the prefix coordinate and upward closed in the suffix coordinate:
\[
        (d_i,d_j)\in F,
\quad i'\le i,\quad j'\ge j
        \quad\Longrightarrow\quad
        (d_{i'},d_{j'})\in F.
\]
After reversing the order on $D^+$, this is exactly an additive Ferrers relation.  A \emph{difference threshold representation} realizes membership in $F$ by an inequality of the form $X_j-X_i>\text{constant}$.  A \emph{prefix cut represented by a threshold} means that a prescribed prefix is exactly the set of chain elements whose potential is below that threshold.
\end{definition}

\begin{lemma}\label{lem:additive}
Let $I=\{1,\ldots,m\}$ and $J=\{1,\ldots,n\}$ be chains.  Let $F\subseteq I\times J$ satisfy
\[
(i,j)\in F,
\quad i'\le i,
\quad j'\le j
\quad\Longrightarrow\quad
(i',j')\in F.
\]
Then there are strictly increasing real numbers $x_i$ and $y_j$ such that
\[
        (i,j)\in F
        \quad\Longleftrightarrow\quad
        x_i+y_j<0.
\]
Moreover, if a prefix cut is prescribed in $I$ and a prefix cut is prescribed in $J$, the representation may be chosen so that both cuts are represented by a common threshold $h$:
\[
        i\text{ lies in the prescribed prefix of }I \Longleftrightarrow x_i<h,
\]
\[
        j\text{ lies in the prescribed prefix of }J \Longleftrightarrow y_j<h.
\]
\end{lemma}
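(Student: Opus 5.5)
The plan is to read off the representation from the row lengths of $F$, then use the translation freedom $x\mapsto x+c$, $y\mapsto y-c$ to place the two prescribed cuts at a common threshold.

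\emph{Step 1: row lengths.} For $i\in I$ let
\[
r_i=|\{j\in J:(i,j)\in F\}|.
\]
Downward closure in the second coordinate makes row $i$ equal to $\{1,\ldots,r_i\}$. Downward closure in the first coordinate makes $r_1\ge r_2\ge\cdots\ge r_m$. Hence
\[
(i,j)\in F\iff j\le r_i .
\]

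\emph{Step 2: a strictly monotone representation.} Fix $0<\delta<1/(2m)$ and define
\[
y_j=j,\qquad x_i=-r_i-\tfrac12+i\delta .
\]
The $y_j$ are strictly increasing. The $x_i$ are strictly increasing, because $-r_i$ is weakly increasing and the term $i\delta$ adds a strict increase. Then
\[
x_i+y_j=(j-r_i)-\tfrac12+i\delta .
\]
The integer $j-r_i$ is at most $0$ exactly when $(i,j)\in F$. Since $0<i\delta<\tfrac12$, the perturbation $-\tfrac12+i\delta$ lies strictly between $-\tfrac12$ and $0$, so the sum is negative if $j-r_i\le 0$ and positive if $j-r_i\ge1$. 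Thus $(i,j)\in F\iff x_i+y_j<0$. This is the only place where a little care is needed: the perturbation must stay below $1/2$ in absolute value.

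\emph{Step 3: the common threshold.} For any real $c$, replacing $(x_i,y_j)$ by $(x_i+c,\;y_j-c)$ preserves all sums $x_i+y_j$ and preserves strict monotonicity. Let the prescribed prefixes be $\{1,\ldots,p\}\subseteq I$ and $\{1,\ldots,q\}\subseteq J$.

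Choose a real $\alpha$ with $x_p<\alpha<x_{p+1}$. If $p=0$ the lower bound is dropped, and if $p=m$ the upper bound is dropped. Such an $\alpha$ exists because the $x_i$ are strictly increasing. Choose $\beta$ with $y_q<\beta<y_{q+1}$ in the same way. Set
\[
c=\frac{\beta-\alpha}{2},\qquad h=\frac{\alpha+\beta}{2},
\]
so that $h-c=\alpha$ and $h+c=\beta$.

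Then
\[
x_i+c<h\iff x_i<\alpha\iff i\le p,
\qquad
y_j-c<h\iff y_j<\beta\iff j\le q,
\]
using strict monotonicity of each sequence. Renaming $x_i+c$ as $x_i$ and $y_j-c$ as $y_j$ gives the required representation with common threshold $h$.

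All quantities can be chosen rational, which keeps the algorithmic remark of the paper valid. Nothing in this lemma looks hard; the only points to check are the bound on the perturbation in Step 2 and the degenerate empty or full prefixes in Step 3.
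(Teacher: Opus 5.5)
Your proof is correct and follows essentially the paper's argument. Both build threshold labels from the Ferrers row data, add a perturbation smaller than $\tfrac12$ to force strict monotonicity, and then apply opposite translations $x_i\mapsto x_i+c$, $y_j\mapsto y_j-c$ to align the two prefix cuts at a common $h$. The differences are cosmetic: the paper fixes $x_i=i$ and encodes the column heights $k_j$ in $y_j$, which is the transpose of your labelling; and your explicit choice $c=(\beta-\alpha)/2$, $h=(\alpha+\beta)/2$ handles empty and full prefixes more cleanly than the paper's interval-overlap argument, making its extra scaling $\lambda$ unnecessary.
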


\begin{proof}
For each row $i$, let $r_i$ be the largest column index $j$ such that $(i,j)\in F$, and set $r_i=0$ if the $i$th row is empty.  The Ferrers condition says exactly that the row lengths are nested:
\[
        r_1\ge r_2\ge\cdots\ge r_m.
\]
Put $x_i=i$.  For $j\in J$, let
\[
        k_j=\max\{i:r_i\ge j\},
\]
with $k_j=0$ if the set is empty.  Since the row lengths are non-increasing, the sequence $k_j$ is non-increasing in $j$.  Choose $0<\delta<1/(2n)$ and set
\[
        y_j=-k_j-\frac12+\delta j.
\]
Then $y_1<y_2<\cdots<y_n$.  Moreover,
\[
        x_i+y_j=i-k_j-\frac12+\delta j.
\]
If $i\le k_j$, then $x_i+y_j<0$; if $i>k_j$, then $x_i+y_j>0$.  Hence $x_i+y_j<0$ holds exactly when $i\le k_j$, equivalently when $j\le r_i$, which is precisely $(i,j)\in F$.

Now suppose the prescribed prefixes are $I_0=\{1,\ldots,p\}$ and $J_0=\{1,\ldots,q\}$, allowing $p=0,m$ and $q=0,n$.  Starting with the representation just constructed, replace
\[
        x_i\mapsto \lambda x_i+\alpha,
        \qquad
        y_j\mapsto \lambda y_j-\alpha,
\]
where $\lambda>0$.  This multiplies every sum $x_i+y_j$ by $\lambda$, so it preserves all signs and therefore preserves $F$.  A common threshold $h$ must lie in both intervals
\[
        (\lambda x_p+\alpha,\lambda x_{p+1}+\alpha),
        \qquad
        (\lambda y_q-\alpha,\lambda y_{q+1}-\alpha),
\]
where $x_0=y_0=-\infty$ and $x_{m+1}=y_{n+1}=+\infty$.  In the non-extreme case, these intervals overlap whenever
\[
        \frac{\lambda(y_q-x_{p+1})}{2}<\alpha<
        \frac{\lambda(y_{q+1}-x_p)}{2},
\]
and the interval for $\alpha$ is non-empty because $x_p<x_{p+1}$ and $y_q<y_{q+1}$.  The same argument covers extreme prefixes if a missing endpoint is interpreted in the extended real line.  Explicitly, an empty prefix imposes $h<\min x_i$ (or $h<\min y_j$), while a full prefix imposes $h>\max x_i$ (or $h>\max y_j$).  The free parameter $\alpha$ moves the two label intervals in opposite directions, so any combination of these one-sided conditions can be made to overlap.  Choosing $h$ in the resulting non-empty open intersection represents both cuts strictly.
\end{proof}

\begin{lemma}\label{lem:difference}
Let $D:s=d_0,d_1,\ldots,d_m=t$ be a chain.  Let $D^-$ be a prefix containing $s$, let $D^+$ be a suffix containing $t$, assume $D^-\cap D^+=\emptyset$, and let $D^0=D\setminus(D^-\cup D^+)$.  Suppose $F\subseteq D^-\times D^+$ satisfies
\[
(d_i,d_j)\in F,
\quad i'\le i,
\quad j'\ge j
\quad\Longrightarrow\quad
(d_{i'},d_{j'})\in F,
\]
and assume that the row $s$ and the column $t$ belong entirely to $F$.  Fix $L>1$.  Given any strictly increasing prescribed potentials in $(-1,1)$ on $D^0$, there are $M>L$ and a strictly increasing sequence
\[
        -M=X_0<X_1<\cdots<X_m=M
\]
which extends those middle potentials, has $X_i<-L$ on $D^-\setminus\{s\}$ and $X_i>L$ on $D^+\setminus\{t\}$, and satisfies
\[
        (d_i,d_j)\in F
        \quad\Longleftrightarrow\quad
        X_j-X_i>M+L
\]
and
\[
        (d_i,d_j)\notin F
        \quad\Longleftrightarrow\quad
        X_j-X_i<M+L
\]
for all $d_i\in D^-$ and $d_j\in D^+$ with $i<j$.  In particular, equality never occurs.
\end{lemma}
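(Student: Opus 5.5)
The plan is to reduce the statement to Lemma~\ref{lem:additive} by writing the potentials near the two ends as small offsets from $\mp M$. Write $D^-=\{d_0,\ldots,d_P\}$ and $D^+=\{d_Q,\ldots,d_m\}$ with $P<Q$, so that $D^0=\{d_{P+1},\ldots,d_{Q-1}\}$, possibly empty. We look for potentials of the form
\[
 X_i=-M+u_i\quad(d_i\in D^-),\qquad X_j=M-v_j\quad(d_j\in D^+),
\]
with $u_0=0$, $v_m=0$, $u$ strictly increasing in $i$ and $v$ strictly increasing as $j$ decreases. Then
\[
X_j-X_i>M+L\iff u_i+v_j<M-L .
\]
So the difference threshold condition becomes an additive threshold condition with constant $K:=M-L$.

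To produce $u$ and $v$, order $D^-$ as $d_0<\cdots<d_P$ and order $D^+$ reversed, as $d_m<d_{m-1}<\cdots<d_Q$. Under these orders $F$ is downward closed in both coordinates, which is the remark in Definition~\ref{def:ferrers}. Lemma~\ref{lem:additive} then gives strictly increasing labels $x$ on $D^-$ and $y$ on reversed $D^+$ with
\[
(d_i,d_j)\in F\iff x_i+y_j<0 .
\]
Set $u_i=\lambda(x_i-x_0)$ and $v_j=\lambda(y_j-y_m)$ for a parameter $\lambda>0$, and put $K'=-(x_0+y_m)$. Since $(s,t)\in F$, we have $K'>0$. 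Membership $(d_i,d_j)\in F$ is then equivalent to $u_i+v_j<\lambda K'$, and non-membership to $u_i+v_j>\lambda K'$, with strict inequality both ways. Define $M=L+\lambda K'$, so $M>L$. This yields both displayed equivalences, and equality never occurs.

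Next, check the bounds and monotonicity on the two end pieces. For $d_i\in D^-\setminus\{s\}$, the condition $X_i<-L$ is equivalent to $u_i<\lambda K'$, that is, $u_i+v_m<\lambda K'$. This holds exactly because $(d_i,t)\in F$, which is the hypothesis that the column $t$ lies in $F$. Symmetrically, $X_j>L$ on $D^+\setminus\{t\}$ follows from the row $s$ lying in $F$. Strict monotonicity inside $D^-$ and inside $D^+$ comes from the strictness of $x$ and $y$, using the reversal of order on $D^+$. Across the gaps, monotonicity holds because
\[
X_P<-L<-1<(\text{middle values})<1<L<X_Q,
\]
using $L>1$ and the fact that the prescribed middle potentials lie in $(-1,1)$. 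This covers the case of an empty middle as well. The endpoints satisfy $X_0=-M$ and $X_m=M$ by construction.

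The main point requiring care is the bookkeeping of orientation: the suffix order must be reversed so that Lemma~\ref{lem:additive} applies. Beyond that, the crucial step is recognizing that the row and column hypotheses are exactly what force the end potentials beyond $\mp L$. The scaling by $\lambda$ is free; it can be fixed at $\lambda=1$, or chosen rational if rational potentials are desired.
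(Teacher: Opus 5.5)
Your proposal is correct and is essentially the paper's proof: you reverse the order on $D^+$, apply Lemma~\ref{lem:additive}, normalize so the labels vanish at $s$ and $t$, and set $X=-M+u$ on $D^-$ and $X=M-v$ on $D^+$ with $M=L+(\text{threshold})$. As in the paper, the full row $s$ and column $t$ are what push the non-end values beyond $\mp L$.

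One point needs an explicit justification: the strict inequality off $F$. The statement of Lemma~\ref{lem:additive} only gives $x_i+y_j\ge0$ for pairs outside $F$. Strictness comes either from the half-integer separation in that lemma's proof, which is what the paper cites, or from replacing the threshold $0$ by any value strictly between $\max_{F}(x_i+y_j)$ and $0$.
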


\begin{proof}
Reverse the order on $D^+$: in the reversed order, $d_j$ comes before $d_{j'}$ exactly when $j\ge j'$.  With this reversed order on the second coordinate, the displayed hypothesis is the additive Ferrers condition.  Applying Lemma~\ref{lem:additive}, and then multiplying all potentials and the threshold by a positive constant if necessary, gives real numbers $U_i$ on $D^-$, real numbers $V_j$ on $D^+$, and a threshold $R>0$ such that
\[
        (d_i,d_j)\in F
        \quad\Longleftrightarrow\quad
        U_i+V_j<R.
\]
The explicit half-integer separation in Lemma~\ref{lem:additive} also gives $U_i+V_j>R$ for every pair outside $F$.
The $U_i$ are strictly increasing in the natural order on $D^-$, while the $V_j$ are strictly increasing in the reversed order on $D^+$.  Translate both families and the threshold by replacing $U_i$ with $U_i-U_s$, $V_j$ with $V_j-V_t$, and $R$ with $R-U_s-V_t$.  This preserves the displayed equivalence and gives $U_s=0$ and $V_t=0$.  Since the row $s$ and the column $t$ belong entirely to $F$, we have $V_j<R$ for every $d_j\in D^+$ and $U_i<R$ for every $d_i\in D^-$.

Set $M=R+L$.  For the prefix and suffix vertices define
\[
        X_i=-M+U_i\quad(d_i\in D^-),
        \qquad
        X_j=M-V_j\quad(d_j\in D^+).
\]
Then $X_s=-M$ and $X_t=M$.  The values on $D^-$ are strictly increasing because the $U_i$ are strictly increasing.  The values on $D^+$ are also strictly increasing in the natural order along $D$: as $j$ increases, the reversed-order potential $V_j$ strictly decreases, and hence $M-V_j$ strictly increases.  Because $U_i<R$ and $V_j<R$, all non-end prefix values are below $-L$ and all non-end suffix values are above $L$.

Insert the prescribed middle potentials on $D^0$.  These lie in $(-1,1)$, while the prefix values lie below $-L$ and the suffix values lie above $L$ with $L>1$; therefore the full sequence $X_0<X_1<\cdots<X_m$ is strictly increasing.  Finally,
\[
        X_j-X_i=2M-(U_i+V_j),
\]
and, since $M=R+L$,
\[
        X_j-X_i>M+L
        \quad\Longleftrightarrow\quad
        U_i+V_j<R.
\]
For pairs outside $F$ the strict reverse inequality $U_i+V_j>R$ gives $X_j-X_i<M+L$.  Thus equality is excluded and the sequence has all required properties.
\end{proof}

\begin{remark}\label{rem:difference-disjoint}
The disjointness hypothesis in Lemma~\ref{lem:difference} is essential for the stated range conditions.  It is automatically satisfied in the two applications below: there, an internal vertex belonging to both $D^-$ and $D^+$ would force two different prescribed $s$--$d_i$ subpaths, contradicting consistency.
\end{remark}

\begin{lemma}\label{lem:two_threshold}
Let $I$ and $J$ be possibly empty finite chains, each of size at most $2$, and let $F\subseteq I\times J$ be an additive Ferrers relation.  Let
\[
        I_b\subseteq I_a\subseteq I,
        \qquad
        J_b\subseteq J_a\subseteq J
\]
be prescribed prefixes.  Then there are strictly increasing real labels $x_i$ for $i\in I$, strictly increasing real labels $y_j$ for $j\in J$, and two real thresholds $h_b<h_a$ such that
\[
        (i,j)\in F \Longrightarrow x_i+y_j\le0,
        \qquad
        (i,j)\notin F \Longrightarrow x_i+y_j\ge0,
\]
\[
        i\in I_b \Longrightarrow x_i\le h_b,
        \qquad
        i\notin I_b \Longrightarrow x_i\ge h_b,
\]
\[
        i\in I_a \Longrightarrow x_i\le h_a,
        \qquad
        i\notin I_a \Longrightarrow x_i\ge h_a,
\]
and the same two threshold conditions hold for $J_b\subseteq J_a$ with the labels $y_j$.  The labels and thresholds may also be made arbitrarily small by a common positive rescaling.
\end{lemma}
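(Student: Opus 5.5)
The plan is to treat the statement as a small linear feasibility problem. I would start from the strict additive representation of Lemma~\ref{lem:additive} and use the remaining freedom, namely the shift $x\mapsto x+\alpha$, $y\mapsto y-\alpha$, a rescaling, and the weak inequalities, to fit the two nested cuts. Each chain has at most two elements, so every element of $I$ (resp.\ $J$) has a \emph{level}: $0$ if it lies in $I_b$, $1$ if it lies in $I_a\setminus I_b$, and $2$ otherwise. Because $I_b\subseteq I_a$ are prefixes, the levels are weakly increasing along the chain. The threshold conditions say exactly that level-$0$ labels are $\le h_b$, level-$1$ labels lie in $[h_b,h_a]$, and level-$2$ labels are $\ge h_a$; the same holds for $J$ with the same $h_b<h_a$. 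The final rescaling claim is immediate, because multiplying all labels and both thresholds by $\lambda>0$ preserves every inequality, which is homogeneous.

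First I would take the labels $x_i,y_j$ from Lemma~\ref{lem:additive}, which represent $F$ with strict signs. Next I would look at the one-parameter family $x_i+\alpha$, $y_j-\alpha$. For each of the two cuts, the admissible values of the corresponding threshold form a closed interval. For the $I$-cut this interval is $[x_p+\alpha,\,x_{p+1}+\alpha]$, with infinite ends for empty or full prefixes, and the $J$-interval moves in the opposite direction. As in the proof of Lemma~\ref{lem:additive}, the set of $\alpha$ for which the $I_b$- and $J_b$-intervals meet is a nonempty interval $A_b$, and similarly there is a nonempty interval $A_a$ for the $a$-cuts. If $A_a\cap A_b\neq\emptyset$, pick $\alpha$ in it and choose $h_b$ and $h_a$ in the respective intersections. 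Since the $I_b$-interval lies weakly to the left of the $I_a$-interval, I can arrange $h_b<h_a$, using a small perturbation inside the intervals. The one exception is when both intersections collapse to a single common point; this degenerate case is handled in the next step.

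The main obstacle is that $A_a$ and $A_b$ may be disjoint. In that case the labels from Lemma~\ref{lem:additive} are too rigid, and I must exploit the small sizes directly. I would first try to replace the generic labels by a hand-chosen configuration on the grid $\{-2,-1,0,1,2\}$, possibly together with $\pm\tfrac12$, with $h_b=-1$ and $h_a=1$. Level-$0$ elements are placed at or below $-1$, level-$1$ elements in $[-1,1]$, and level-$2$ elements at or above $1$. Pairs whose required sign conflicts with the crude bound coming from the levels are resolved by ties: labels sit exactly on a threshold, so that $x_i+y_j=0$. For example, $i\in I_b$, $j\in J_a$ and $(i,j)\notin F$ forces $x_i=-1$ and $y_j=1$, which is allowed because the inequalities are weak. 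If this symmetric grid fails for some pattern, the fallback is to let the thresholds move and solve the finitely many linear inequalities for that pattern directly.

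Strict monotonicity means two elements of the same chain cannot both sit at the same threshold value. I would handle this by a case analysis, using the symmetry $I\leftrightarrow J$, over the level sequences (at most six weakly increasing level patterns for a chain of size at most $2$) and the Ferrers shapes on a $2\times2$ grid. In each case the Ferrers property of $F$ guarantees that at most one element per chain needs to be pinned to a given threshold. This checking, and in particular confirming that the Ferrers property always avoids a forced collision of two pinned labels, is the step I expect to be most delicate. Finally, I would rescale to make all labels and thresholds as small as desired.
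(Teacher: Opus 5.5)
Your proposal has a genuine gap. The only case that needs real work is deferred to a case analysis (``the fallback is to let the thresholds move and solve the finitely many linear inequalities'') that you never carry out. The two concrete mechanisms you do specify do not cover all inputs.

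Step 2 freezes the shape of the strict representation from Lemma~\ref{lem:additive}, since only $\lambda$ and the opposite shift $\alpha$ are free. Step 3 freezes $h_b=-1$, $h_a=1$ and the sum threshold $0$ all at once. Now take $I=J=\{1,2\}$, $F=I\times J$, $I_b=\emptyset$, $I_a=I$, $J_b=J_a=\{1\}$.
\begin{itemize}
\item The windows force $x_2-x_1\le h_a-h_b\le y_2-y_1$.
\item The labels of Lemma~\ref{lem:additive} have $x_2-x_1=1>\delta=y_2-y_1$, so your $A_a\cap A_b$ is empty.
\item The symmetric grid is infeasible: $x_2+y_2\le 0$ with $y_2\ge 1$ forces $x_2=-1$, leaving no room for $x_1<x_2$ inside $[-1,1]$.
\end{itemize}
The lemma does hold here, for instance with $x=(-5.5,-4.5)$, $y=(-7,-3)$, $h_b=-6$, $h_a=-4$. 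Nothing in your written argument produces this, though.

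Your claim that the Ferrers property leaves at most one pinned label per chain and threshold is unproved. It is also not what decides feasibility: in this example no label sits on a threshold, yet both of your devices fail. The degenerate single-point case of Step 2 is likewise left open.

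The idea that closes the argument in the paper is to let the sum threshold float.
\begin{itemize}
\item Choose auxiliary strictly increasing labels $\xi_i,\eta_j$ in the fixed windows $(-\infty,-1]$, $[-1,1]$, $[1,\infty)$ dictated by the types.
\item Separate $F$ weakly by an arbitrary threshold $T$.
\item Translate both families and both thresholds by the same amount: $x_i=H+\xi_i$, $y_j=H+\eta_j$, $h_b=H-1$, $h_a=H+1$, with $H=-T/2$.
\end{itemize}
This turns $\xi_i+\eta_j\le T$ (resp.\ $\ge T$) into $x_i+y_j\le 0$ (resp.\ $\ge 0$). Your $\alpha$ moves the two families in opposite directions relative to the common thresholds, so it never supplies this freedom.

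With $T$ free, the remaining cases are short.
\begin{itemize}
\item A singleton chain is trivial, because the sums are monotone along the other chain.
\item On the $2\times 2$ grid, every Ferrers ideal is separable by some $T$ except the two ideals of size two. Each of those is equivalent to a single gap comparison, $\eta_2-\eta_1\le\xi_2-\xi_1$ or its reverse.
\item The windows allow gaps in $(0,2]$ for types $(1,1)$, gaps $\ge 2$ for types $(0,2)$, and every positive gap otherwise, so the comparison can always be met.
\item When a chain of types $(1,1)$ must have gap at least that of a chain of types $(0,2)$, both gaps must equal $2$. This is exactly where the ties you anticipated are forced.
\end{itemize}
This short gap table replaces your open-ended case analysis.
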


\begin{proof}
Assign to each element of $I$ a type $0,1,$ or $2$: type $0$ means membership in $I_b$, type $1$ means membership in $I_a\setminus I_b$, and type $2$ means membership in $I\setminus I_a$.  Define the types in $J$ similarly.  Since all four sets are prefixes, the type sequence along each chain is non-decreasing.

We first choose auxiliary labels $\xi_i,\eta_j$ and a real number $T$ with the following properties:
\[
        (i,j)\in F \Longrightarrow \xi_i+\eta_j\le T,
        \qquad
        (i,j)\notin F \Longrightarrow \xi_i+\eta_j\ge T,
\]
and each auxiliary label lies in the interval prescribed by its type,
\[
        \text{type }0: (-\infty,-1],
        \qquad
        \text{type }1: [-1,1],
        \qquad
        \text{type }2: [1,\infty).
\]
The auxiliary labels are required to be strictly increasing along their chains.

If one of the chains is empty, there is nothing to separate.  If one chain has a single element, say $I=\{1\}$, then a Ferrers relation in $I\times J$ is simply an initial segment of the chain $J$.  Choose $\xi_1$ in its prescribed type interval and choose the labels $\eta_j$ strictly increasing inside their prescribed type intervals; if necessary, move the labels in the unbounded type-$0$ or type-$2$ intervals farther apart.  The sums $\xi_1+\eta_j$ are then strictly increasing in $j$, so a threshold $T$ separates the initial segment.  The case $|J|=1$ is symmetric.  Suppose therefore that $I=\{1,2\}$ and $J=\{1,2\}$.  Put
\[
        s_{ij}=\xi_i+\eta_j.
\]
Since $\xi_1<\xi_2$ and $\eta_1<\eta_2$, we always have
\[
        s_{11}<s_{12}<s_{22},
        \qquad
        s_{11}<s_{21}<s_{22}.
\]
Thus every Ferrers lower ideal in the $2\times2$ grid is automatically separable by a threshold except possibly the two ideals of size two.  If
\[
        F=\{(1,1),(1,2)\},
\]
then separation is equivalent to $s_{12}\le s_{21}$, that is,
\[
        \eta_2-\eta_1\le \xi_2-\xi_1.
\]
If
\[
        F=\{(1,1),(2,1)\},
\]
then separation is equivalent to $s_{21}\le s_{12}$, that is,
\[
        \xi_2-\xi_1\le \eta_2-\eta_1.
\]
It remains only to know that the two gaps can be chosen with either prescribed weak order while respecting the type intervals.  For a non-decreasing two-term type sequence, the possible positive gaps are as follows: the type sequence $(1,1)$ allows exactly the gaps $0<g\le2$; the type sequence $(0,2)$ allows exactly the gaps $g\ge2$; and the four remaining type sequences allow every positive gap.  Hence, to impose $\eta_2-\eta_1\le \xi_2-\xi_1$, choose the $\xi$-gap at least as large as the $\eta$-gap; in the only extremal case $(\xi\text{-types})=(1,1)$ and $(\eta\text{-types})=(0,2)$, take both gaps equal to $2$.  The opposite inequality is handled symmetrically.  This proves the existence of the auxiliary labels and the separating threshold $T$.

Now set
\[
        H=-\frac{T}{2},
        \qquad
        x_i=H+\xi_i,
        \qquad
        y_j=H+\eta_j,
\]
and choose
\[
        h_b=H-1,
        \qquad
        h_a=H+1.
\]
Then $h_b<h_a$, and the type intervals above translate exactly into the two required prefix-threshold conditions.  Moreover
\[
        x_i+y_j\le0
        \quad\Longleftrightarrow\quad
        \xi_i+\eta_j\le T,
\]
and similarly for the reversed weak inequality.  A common positive rescaling preserves all weak inequalities, all represented cuts, and the strict inequality $h_b<h_a$.  The lemma follows.
\end{proof}

\begin{lemma}[four-route comparison]\label{lem:four-route}
Let $B$ and $C$ be two $s$--$t$ arms of total weights $M_B$ and $M_C$, and let a third $s$--$t$ arm have total weight $L$.  Normalize potentials on $B$ and $C$ so that
\[
 d_B(s,b_i)-d_B(t,b_i)=X_i,
 \qquad
 d_C(s,c_j)-d_C(t,c_j)=Y_j.
\]
For internal vertices $b_i,c_j$, the four simple $b_i$--$c_j$ routes are
\[
\begin{array}{ll}
 S=b_iBsCc_j, & T=b_iBtCc_j,\\
 U=b_iBsAtCc_j, & V=b_iBtAsCc_j,
\end{array}
\]
where $A$ denotes the third arm.  The following conditions are necessary and sufficient for the indicated route to be a geodesic:
\[
\begin{array}{c|ccc}
\text{route} & \multicolumn{3}{c}{\text{conditions}}\\ \hline
S&X_i\le L&Y_j\le L&X_i+Y_j\le0\\
T&X_i\ge-L&Y_j\ge-L&X_i+Y_j\ge0\\
U&X_i\le-L&Y_j\ge L&\\
V&X_i\ge L&Y_j\le-L&
\end{array}
\]
If every displayed inequality in the relevant row is strict, that route is the unique geodesic.
\end{lemma}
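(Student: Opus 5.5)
The plan is to compute the lengths of the four routes explicitly in terms of the potentials and compare them pairwise. Set $\beta_i=d_B(s,b_i)$ and $\gamma_j=d_C(s,c_j)$. Since $d_B(t,b_i)=M_B-\beta_i$, we get $X_i=2\beta_i-M_B$, and hence
\[
\beta_i=\tfrac{M_B+X_i}{2},\qquad M_B-\beta_i=\tfrac{M_B-X_i}{2},
\]
with the analogous formulas for $\gamma_j$ in terms of $Y_j$. The four route lengths then come out as:
\begin{align*}
w(S)&=\beta_i+\gamma_j=\tfrac{M_B+M_C}{2}+\tfrac{X_i+Y_j}{2},\\
w(T)&=\tfrac{M_B+M_C}{2}-\tfrac{X_i+Y_j}{2},\\
w(U)&=\beta_i+L+(M_C-\gamma_j)=\tfrac{M_B+M_C}{2}+L+\tfrac{X_i-Y_j}{2},\\
w(V)&=\tfrac{M_B+M_C}{2}+L-\tfrac{X_i-Y_j}{2}.
\end{align*}

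The next step is to confirm that these are the only simple $b_i$--$c_j$ paths. In a theta graph, a simple path between internal vertices of arms $B$ and $C$ must leave $b_i$ towards $s$ or towards $t$. It must then either enter $C$ directly at that branch vertex, or cross the third arm $A$ to the other branch vertex and enter $C$ there. It cannot re-enter $B$. So the distance is the minimum of these four quantities, and a route is a geodesic exactly when its length is at most each of the other three.

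Then I would do the pairwise comparisons after subtracting the common constant $(M_B+M_C)/2$ and doubling:
\begin{align*}
S\le T&\iff X_i+Y_j\le0, & S\le U&\iff Y_j\le L, & S\le V&\iff X_i\le L,\\
T\le U&\iff X_i\ge -L, & T\le V&\iff Y_j\ge -L, & U\le V&\iff X_i\le Y_j.
\end{align*}
For the rows $S$ and $T$, the three comparisons read off exactly the stated conditions. For $U$, the conditions $U\le S$ and $U\le T$ give $Y_j\ge L$ and $X_i\le -L$. The comparison $U\le V$, i.e. $X_i\le Y_j$, is then automatic, since $X_i\le -L<L\le Y_j$. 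The same argument handles $V$. Strictness goes through verbatim: if all inequalities in the row are strict, the route is strictly shorter than the other three, and for $U$ and $V$ the implied inequality is also strict. This yields uniqueness.

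I expect no real obstacle. The main care point is the enumeration of simple routes, which relies on $b_i,c_j$ being internal so that the routes are distinct and simple; the rest is bookkeeping of the normalization.
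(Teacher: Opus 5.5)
Your proposal is correct and matches the paper's proof: you compute the same four route lengths $\pm\frac{X_i+Y_j}{2}$ and $L\pm\frac{X_i-Y_j}{2}$ (modulo the common term $(M_B+M_C)/2$) and read the table off the pairwise comparisons. Your explicit enumeration of the four simple routes, and your observation that the $U$--$V$ comparison is implied by $X_i\le -L<L\le Y_j$ (so it is strict automatically), only fill in details the paper leaves to the phrase ``pairwise comparison gives exactly the table.''
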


\begin{proof}
After cancelling the common term $(M_B+M_C)/2$, the four route lengths are respectively
\[
 \frac{X_i+Y_j}{2},\qquad
 -\frac{X_i+Y_j}{2},\qquad
 L+\frac{X_i-Y_j}{2},\qquad
 L+\frac{-X_i+Y_j}{2}.
\]
Pairwise comparison gives exactly the table.  Strict inequalities make the selected value strictly smaller than each of the other three.
\end{proof}

\section{Metrizable cases and proof of the classification}

We now prove the two positive cases not already covered by outerplanarity, and then complete the proof of Theorem~\ref{thm:main}.  The first case is the infinite family $\Theta_{2,b,c}$; the second is the exceptional graph $\Theta_{3,3,3}$.

\begin{theorem}\label{thm:theta2bc}
For every $2\le b\le c$, the theta graph $\Theta_{2,b,c}$ is strictly metrizable.
\end{theorem}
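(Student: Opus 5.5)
By Proposition~\ref{prop:nonneighborly} and Proposition~\ref{prop:theta2_nonshort}, it suffices to treat a neighborly consistent system $\cP$ on $\Theta_{2,b,c}$ with $P_{s,t}=A=s-a-t$. Write $B:s=b_0,\dots,b_m=t$ and $C:s=c_0,\dots,c_n=t$. The aim is to choose potentials $X_i$ on $B$ and $Y_j$ on $C$ with $X_0=-X_m=-M_B$ and $Y_0=-Y_n=-M_C$, to set the edge lengths to half the successive differences, and to give $sa$ and $at$ weights summing to a suitable $L$. The prescribed paths are of a few types: paths inside one long arm possibly detouring through $A$, paths from $a$, and paths between $b_i$ and $c_j$. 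For each type, the geodesic conditions should reduce to strict threshold inequalities in the potentials. These come from Lemma~\ref{lem:four-route} for the cross pairs, and from analogous two-route comparisons for the same-arm pairs.

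First I extract the combinatorial structure from consistency. For each internal $b_i$, the path $P_{s,b_i}$ runs either along $B$ or via $s\,a\,t$ and then backwards along $B$; it cannot use $C$, since otherwise $P_{s,t}$ would be forced to avoid $A$. Let $B^-$ be the set of $b_i$ with $P_{t,b_i}$ passing through $s$, let $B^+$ be the set with $P_{s,b_i}$ passing through $t$, and let $B^0$ be the rest. Subpath closure shows that $B^-$ is a prefix and $B^+$ is a suffix, and Remark~\ref{rem:difference-disjoint} gives disjointness; define $C^\pm,C^0$ in the same way. For $b_i\in B^-$ and $b_j\in B^+$, the pair is put in $F_B$ when $P_{b_i,b_j}=b_iBs\,a\,tBb_j$. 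Consistency makes $F_B$ a difference Ferrers relation containing row $s$ and column $t$. The same-arm comparison between $X_j-X_i$ and $M_B+L$ then matches the threshold in Lemma~\ref{lem:difference}, provided the two weights $L$ agree, so I use one common $L$ for both arms and rescale. The cross pairs split by position. If $b_i\in B^-$ and $c_j\in C^+$, the prescribed route is forced to be $S$ or $U$, and the conditions $X_i<-L$, $Y_j>L$ (with the sign of $X_i+Y_j$) are guaranteed by the range conditions of Lemma~\ref{lem:difference}, together with the requirement that $S$ be chosen exactly for the pairs forced by consistency. The symmetric situations are handled the same way. For $b_i\in B^0$ and $c_j\in C^0$ only $S$ and $T$ are possible. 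Consistency makes the set of pairs using $S$ an additive Ferrers relation on $B^0\times C^0$, and I realize it by Lemma~\ref{lem:additive} with labels rescaled into $(-1,1)$. These become the prescribed middle potentials in Lemma~\ref{lem:difference}.

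Next come the paths from $a$. The vertices $v$ whose path $P_{a,v}$ begins with $as$ form a set that is a prefix on each arm. The route comparison gives $d(s,v)-d(t,v)$ compared with $w(at)-w(sa)$, which is $X_i$ or $Y_j$ compared with a threshold $h$. The common-threshold clause of Lemma~\ref{lem:additive} arranges this on the middle parts, and the prefix and suffix parts automatically lie on the correct sides because their potentials are below $-L$ or above $L$. I then take $w(sa)=(L-h)/2$ and $w(at)=(L+h)/2$, which are positive since $|h|<1<L$.

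The step I expect to be the main obstacle is the mixed cross pairs. These are the pairs with $b_i\in B^-$ and $c_j\in C^0$ and similar, where both $S$ and $T$, or $S$ and $U$, are a priori possible. There the sign of $X_i+Y_j$ must agree with the consistency-determined choice, while $X_i$ has already been fixed by the difference representation. I expect consistency to force the choice: for example, $b_i\in B^-$ means $P_{t,b_i}$ goes through $s$, so every $c_j$ that reaches $b_i$ via $t$ must itself lie in the region where $Y_j$ is large. I would verify case by case that this yields exactly the needed inequalities, using the bounds $|Y_j|<1$ and $X_i<-L$. Where a genuine coupling remains, I would fold the cuts $B^-$ and $C^-$ into the additive Ferrers relation as extra prescribed prefix cuts, as Lemma~\ref{lem:additive} allows. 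Finally, all inequalities are strict, so strictness of $\cP$ follows, and the construction produces rational weights.
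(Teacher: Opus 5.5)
Your outline is the paper's proof: the same reduction to $P_{s,t}=A$, the same partition $D^-,D^0,D^+$, Lemma~\ref{lem:difference} on $D^-\times D^+$ with a common $L$, Lemma~\ref{lem:additive} with a common threshold $h$ on $B^0\times C^0$, the weights $w(sa)=(L-h)/2$ and $w(at)=(L+h)/2$, and Lemma~\ref{lem:four-route} for cross pairs. However, you leave the step you call the main obstacle unproved, and your description of it contains a false claim. You assert that on $B^-\times C^+$ the route is ``$S$ or $U$''. If $S$ could occur there, the construction would fail. Lemma~\ref{lem:difference} already gives $X_i<-L$ and $Y_j>L$ on that rectangle, and by Lemma~\ref{lem:four-route} this makes $U$ strictly shorter than $S$.

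The missing ingredient is a forcing statement, which the paper proves before any potentials are chosen. Each route fixes memberships through its subpaths, for internal $b_i,c_j$:
\begin{itemize}
\item $S=b_iBsCc_j$ contains $sBb_i$ and $sCc_j$, so $b_i\notin B^+$ and $c_j\notin C^+$.
\item $T$ forces $b_i\notin B^-$ and $c_j\notin C^-$.
\item $U$ contains $b_iBs\,a\,t$ and $s\,a\,tCc_j$, so $b_i\in B^-$ and $c_j\in C^+$.
\item $V$ forces $b_i\in B^+$ and $c_j\in C^-$.
\end{itemize}
Hence $U$ is prescribed exactly on $B^-\times C^+$ and $V$ exactly on $B^+\times C^-$. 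Off these rectangles, membership in $B^-$ or $C^-$ forces $S$, and membership in $B^+$ or $C^+$ forces $T$.

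Every mixed pair is then settled by the range conditions alone. For example, if $b_i\in B^-$ and $c_j\notin C^+$, then $X_i<-L$ and $Y_j<1$, so all three $S$-inequalities hold strictly. No coupling remains, so your fallback is unnecessary. That fallback also would not fit Lemma~\ref{lem:additive}, which represents one prefix cut per middle chain, whereas the $B^-$ potentials come from Lemma~\ref{lem:difference}.

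For the paths from $a$, your word ``automatically'' hides a consistency step. You need that $v\in D^-$ forces $P_{a,v}$ to begin with $as$: this holds because $P_{v,t}=vDs\,a\,t$ has $v$--$a$ subpath $vDs\,a$. Likewise you need that $v\in D^+$ forces $P_{a,v}$ to begin with $at$. The potential ranges only make the geometry agree with these forced choices.

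Three routine checks are still owed:
\begin{itemize}
\item Routes through the other long arm are strictly longer, since $M_E>L$ and $|h|<1$.
\item Same-arm pairs outside $D^-\times D^+$ are strictly shorter directly, by the potential ranges.
\item The cases where $B^0$ or $C^0$ is empty must be treated separately, since Lemma~\ref{lem:additive} is then not needed.
\end{itemize}
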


\begin{proof}
Let the length-two arm be
\[
        A=s-a-t,
\]
and write the other arms as
\[
        B:s=b_0,b_1,\ldots,b_b=t,
        \qquad
        C:s=c_0,c_1,\ldots,c_c=t.
\]
Let $\cP$ be a consistent path system.  By Proposition~\ref{prop:nonneighborly}, we may assume $\cP$ is neighborly.  By Proposition~\ref{prop:theta2_nonshort}, we may assume
\[
        P_{s,t}=A=s-a-t.
\]

For a long arm $D:s=d_0,d_1,\ldots,d_m=t$, define three vertex sets
\[
        D^- = \{s\}\cup\{d_i:1\le i\le m-1,\ P_{d_i,t}=d_iDs\,a\,t\},
\]
\[
        D^+ = \{t\}\cup\{d_i:1\le i\le m-1,\ P_{s,d_i}=s\,a\,tDd_i\},
\]
and
\[
        D^0=D\setminus(D^-\cup D^+).
\]
Thus $D^-$ consists of vertices whose prescribed path to $t$ first goes backward along $D$ to $s$ and then crosses the length-two arm $A$; $D^+$ consists of vertices whose prescribed path from $s$ first crosses $A$ and then follows $D$ backward from $t$; and $D^0$ is the middle part where neither of these two forced behaviours occurs.

Consistency implies that $D^-$ is a prefix, $D^+$ is a suffix, and $D^-\cap D^+=\varnothing$.  For example, if $d_i\in D^-$ and $i'\le i$, then the $d_{i'}$--$t$ subpath of $P_{d_i,t}$ is $d_{i'}Ds\,a\,t$, so $d_{i'}\in D^-$.  The assertion for $D^+$ is symmetric.  If an internal vertex $d_i$ belonged to both $D^-$ and $D^+$, then the $s$--$d_i$ subpath of $P_{d_i,t}$ would be the direct subpath $sDd_i$, whereas $P_{s,d_i}$ would be $s\,a\,tDd_i$, contradicting consistency.

\begin{figure}[t]
\centering
\begin{tikzpicture}[x=1cm,y=1cm]
  \draw[very thick] (0,0)--(8,0);
  \foreach \x in {0,1,2,3,4,5,6,7,8} \fill (\x,0) circle (1.4pt);
  \draw[very thick,blue] (0,0)--(2.6,0);
  \draw[very thick,orange] (5.4,0)--(8,0);
  \node[below] at (0,0) {$s$}; \node[below] at (8,0) {$t$};
  \node[above,blue] at (1.3,0) {$D^-$};
  \node[above] at (4,0) {$D^0$};
  \node[above,orange] at (6.7,0) {$D^+$};
  \node[below=7mm] at (1.3,0) {$X<-L$};
  \node[below=7mm] at (4,0) {$-1<X<1$};
  \node[below=7mm] at (6.7,0) {$X>L$};
\end{tikzpicture}
\caption{The consistency partition and potential ranges on a long arm.}
\label{fig:arm-partition}
\end{figure}

For $i<j$ on the same long arm $D$, if $P_{d_i,d_j}$ uses $A$, then $d_i\in D^-$ and $d_j\in D^+$.  The set of such same-arm pairs is a difference Ferrers relation on $D^-\times D^+$: if $(d_i,d_j)$ uses $A$, $i'\le i$, and $j'\ge j$, then $P_{d_i,d_j}$ contains $d_{i'}Ds\,a\,tDd_{j'}$ as a subpath, so $P_{d_{i'},d_{j'}}$ also uses $A$.

Next consider a cross-arm pair $b_i\in V(B)\setminus\{s,t\}$ and $c_j\in V(C)\setminus\{s,t\}$.  There are four relevant simple routes between them:
\[
S:b_iBs\,Cc_j,
\quad
T:b_iBt\,Cc_j,
\quad
U:b_iBs\,a\,tCc_j,
\quad
V:b_iBt\,a\,sCc_j.
\]
The letters record which junctions are used: $S$ goes through the branch vertex $s$, $T$ goes through $t$, $U$ crosses $A$ from $s$ to $t$, and $V$ crosses $A$ from $t$ to $s$.  Consistency gives
\[
        P_{b_i,c_j}=U
        \quad\Longleftrightarrow\quad
        b_i\in B^-\text{ and }c_j\in C^+,
\]
\[
        P_{b_i,c_j}=V
        \quad\Longleftrightarrow\quad
        b_i\in B^+\text{ and }c_j\in C^-.
\]
Indeed, the forward implication follows by taking the $b_i$--$t$ and $s$--$c_j$ subpaths.  Conversely, if $b_i\in B^-$ and $c_j\in C^+$, then the routes $S,T,V$ would give a wrong $b_i$--$t$ or $s$--$c_j$ subpath, so only $U$ is possible.  The proof for $V$ is symmetric.

Outside the two exceptional rectangles $B^-\times C^+$ and $B^+\times C^-$, the route is either $S$ or $T$.  More precisely, away from these rectangles, membership in $B^-$ or $C^-$ forces the route through $s$, while membership in $B^+$ or $C^+$ forces the route through $t$; otherwise one obtains a wrong $b_i$--$t$, $s$--$c_j$, $s$--$b_i$, or $c_j$--$t$ subpath.  On the central rectangle $B^0\times C^0$, the $S$-region is an additive Ferrers relation: if $P_{b_i,c_j}=S$, $i'\le i$, and $j'\le j$, then $P_{b_i,c_j}$ contains $b_{i'}Bs\,Cc_{j'}$ as a subpath, hence $P_{b_{i'},c_{j'}}=S$.

Consistency also controls the paths from $a$ to the long arms.  Vertices in $D^-$ have their $a$--$d_i$ path beginning with $as$, and vertices in $D^+$ have their $a$--$d_i$ path beginning with $at$.  The opposite choice would either give the direct $d_i$--$t$ or $s$--$d_i$ subpath where the definition of $D^-$ or $D^+$ prescribes a path through $A$, or would contain an $s$--$t$ subpath different from $A$.

Fix $L=2$.  The vertices $d_i$ of any long arm whose $a$--$d_i$ path begins with $as$ form a prefix: if $i'\le i$ and $P_{a,d_i}$ begins with $as$, then the $a$--$d_{i'}$ subpath of $P_{a,d_i}$ also begins with $as$.  If $B^0\times C^0$ is empty, choose arbitrary strictly increasing middle potentials in $(-1,1)$ on the non-empty middle chain or chains, and choose a real threshold $h$ representing the relevant prefix cuts there; if both middle chains are empty, take $h=0$.  Since a finite ordered set with a prescribed prefix is separated by an open interval of possible thresholds, after a common small rescaling we may also require $|h|<1$.  If both $B^0$ and $C^0$ are non-empty, apply Lemma~\ref{lem:additive} to the Ferrers $S$-region on $B^0\times C^0$, together with the prefix cuts on $B^0$ and $C^0$ determined by whether $P_{a,b_i}$ and $P_{a,c_j}$ begin with $as$.  This gives strictly increasing middle potentials $X_i$ for $b_i\in B^0$, $Y_j$ for $c_j\in C^0$, and a threshold $h$, such that
\[
P_{a,b_i}\text{ begins with }as \Longleftrightarrow X_i<h,
\qquad
P_{a,c_j}\text{ begins with }as \Longleftrightarrow Y_j<h,
\]
and
\[
P_{b_i,c_j}=S \Longleftrightarrow X_i+Y_j<0
\quad (b_i\in B^0,\ c_j\in C^0).
\]
Scaling all middle potentials and, when necessary, the threshold by a common small positive factor, assume $|X_i|,|Y_j|,|h|<1$ whenever the corresponding quantities are present.

Now apply Lemma~\ref{lem:difference} separately to the same-arm Ferrers relations on $B^-\times B^+$ and $C^-\times C^+$, preserving the already chosen middle potentials.  We obtain strictly increasing potential sequences
\[
        -M_B=X_0<X_1<\cdots<X_b=M_B,
\]
\[
        -M_C=Y_0<Y_1<\cdots<Y_c=M_C,
\]
with prefix values below $-L$, middle values in $(-1,1)$, suffix values above $L$, and with same-arm paths through $A$ represented exactly by
\[
        X_j-X_i>M_B+L
        \quad\text{on }B^-\times B^+,
\]
\[
        Y_j-Y_i>M_C+L
        \quad\text{on }C^-\times C^+.
\]

Define edge weights by
\[
        w(sa)=\frac{L-h}{2},
        \qquad
        w(at)=\frac{L+h}{2},
\]
\[
        w(b_{i-1}b_i)=\frac{X_i-X_{i-1}}2,
        \qquad
        w(c_{j-1}c_j)=\frac{Y_j-Y_{j-1}}2.
\]
All weights are positive because $|h|<L$ and the potential sequences are strictly increasing.  The total lengths of $B$ and $C$ are $M_B$ and $M_C$, respectively, and the potential normalization gives
\[
        d_B(s,b_i)-d_B(t,b_i)=X_i,
        \qquad
        d_C(s,c_j)-d_C(t,c_j)=Y_j.
\]

We verify that every prescribed path is a geodesic.  First consider $a$ and a vertex $d_i$ on a long arm $D$, and let $E$ be the other long arm.  Write $Z_i$ for the corresponding potential and $M_D,M_E$ for the total lengths of $D,E$.  The two direct routes through the endpoints of $A$ have lengths
\[
        R_s=\frac{L-h}{2}+\frac{M_D+Z_i}{2},
        \qquad
        R_t=\frac{L+h}{2}+\frac{M_D-Z_i}{2}.
\]
Thus $R_s\le R_t$ exactly when $Z_i\le h$.  There are also two routes that use the other long arm $E$: their lengths are
\[
        R_s+M_E-Z_i,
        \qquad
        R_t+M_E+Z_i.
\]
If the prescribed path begins with $as$, then either $d_i\in D^-$, so $Z_i<-L$, or $d_i\in D^0$ and the threshold construction gives $Z_i<h<1$.  Hence $R_s\le R_t$ and
\[
        (R_s+M_E-Z_i)-R_s=M_E-Z_i>0,
\]
\[
        (R_t+M_E+Z_i)-R_s=M_E+h>0,
\]
because $M_E>L=2$ and $|h|<1$.  If the prescribed path begins with $at$, then either $d_i\in D^+$, so $Z_i>L$, or $d_i\in D^0$ and $Z_i\ge h>-1$.  Hence $R_t\le R_s$ and
\[
        (R_t+M_E+Z_i)-R_t=M_E+Z_i>0,
\]
\[
        (R_s+M_E-Z_i)-R_t=M_E-h>0.
\]
Therefore every prescribed $a$--$d_i$ path is the unique geodesic.

For two vertices $d_i,d_j$ on the same long arm $D$, $i<j$, let $Z$ denote the corresponding potential sequence and let $M_D$ be the total length of the arm.  The direct route along $D$ has length $(Z_j-Z_i)/2$, while the route through $A$ has length
\[
        M_D+L-\frac{Z_j-Z_i}{2}.
\]
Thus the route through $A$ is no longer than the direct route exactly when $Z_j-Z_i\ge M_D+L$.  If the prescribed path uses $A$, then $(d_i,d_j)\in D^-\times D^+$ and Lemma~\ref{lem:difference} gives $Z_j-Z_i>M_D+L$.  If the prescribed path is direct and $(d_i,d_j)\in D^-\times D^+$, the strict complementary part of that lemma gives $Z_j-Z_i<M_D+L$.  In all remaining direct cases the latter strict inequality follows from the potential ranges: two prefix vertices, two suffix vertices, or two middle vertices differ by less than $M_D+L$, and a prefix-to-middle or middle-to-suffix pair differs by less than $M_D+1<M_D+L$.  The route through the other long arm is strictly longer than the route through $A$, because the other long arm has total length larger than $L$.

Finally take $b_i\in V(B)\setminus\{s,t\}$ and $c_j\in V(C)\setminus\{s,t\}$.  Apply Lemma~\ref{lem:four-route}.  The rectangles $B^-\times C^+$ and $B^+\times C^-$ are exactly the $U$ and $V$ regions, and the strict potential ranges give the strict inequalities in the corresponding rows.  Suppose next that $b_i\in B^-$ and $c_j\notin C^+$.  Then $X_i<-L$, while $Y_j<L$, so all inequalities for $S$ are strict.  The case $c_j\in C^-$ and $b_i\notin B^+$ is symmetric.  Similarly, if $b_i\in B^+$ and $c_j\notin C^-$, then $X_i>L$ and $Y_j>-L$, so all inequalities for $T$ are strict; the remaining symmetric case is identical.  On $B^0\times C^0$, Lemma~\ref{lem:additive} gives the required strict sign of $X_i+Y_j$.  Hence every cross-arm prescribed path is the unique geodesic.  Together with Proposition~\ref{prop:nonneighborly} and Proposition~\ref{prop:theta2_nonshort}, this proves strict metrizability.
\end{proof}

The remaining positive case is the exceptional graph with three arms of length three.  Its proof uses the same potentials as Theorem~\ref{thm:theta2bc}, but the chosen $s$--$t$ arm now has two internal vertices and therefore produces two thresholds instead of one.

\begin{proposition}\label{prop:theta333}
The graph $\Theta_{3,3,3}$ is metrizable.
\end{proposition}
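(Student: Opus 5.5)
We mirror the proof of Theorem~\ref{thm:theta2bc}. We only need ordinary (non-strict) realizations, since $\Theta_{3,3,3}$ is not strictly metrizable.

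\emph{Reductions.} Let $\cP$ be a consistent path system on $\Theta_{3,3,3}$. By Proposition~\ref{prop:nonneighborly} we may assume $\cP$ is neighborly. The prescribed path $P_{s,t}$ is then one of the three arms; call it $A=s-a_1-a_2-t$ and the others $B,C$. No prescribed path between two vertices of $B\cup C$ can pass through $a_1$ or $a_2$ unless it contains $A$ as a subpath. We define $B^-,B^0,B^+$ and $C^-,C^0,C^+$ exactly as in Theorem~\ref{thm:theta2bc}, with $A$ in place of $s-a-t$. The same consistency arguments then give:
\begin{itemize}
\item prefix/suffix disjointness;
\item a difference Ferrers relation on $D^-\times D^+$ for each long arm $D$;
\item the $U$ and $V$ rectangles;
\item an additive Ferrers $S$-region on $B^0\times C^0$.
\end{itemize}
Since each long arm has only two internal vertices, the middle chains $B^0,C^0$ have size at most $2$.

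\emph{Two nested cuts.} For each internal vertex $a_k$ of $A$, the set of long-arm vertices $v$ with $P_{a_k,v}$ leaving through $s$ is a prefix on each long arm, by the same subpath argument as before. Consistency along $A$ shows these cuts are nested: if $P_{a_2,v}$ goes through $s$, it passes through $a_1$, so $P_{a_1,v}$ also goes through $s$. This gives prefixes $I_b\subseteq I_a$ on $B^0$ and $J_b\subseteq J_a$ on $C^0$, where the subscript $b$ refers to $a_2$ and $a$ to $a_1$. Prefix vertices go via $s$ and suffix vertices via $t$ for both $a_1$ and $a_2$.

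Apply Lemma~\ref{lem:two_threshold} to obtain small middle potentials with weak thresholds $h_b<h_a$ inside $(-1,1)$. Then extend these by Lemma~\ref{lem:difference} on each long arm with $L$ large compared with $|A|$, and set arm edge weights to half of the potential differences. On $A$, choose weights $\alpha_1,\alpha_2,\alpha_3$ with total $L$.

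\emph{Calibrating $A$.} For $a_1$, the comparison of the routes through $s$ and through $t$ reduces to $Z\le h$ with $h=\alpha_2+\alpha_3-\alpha_1$. For $a_2$ it reduces to $Z\le\alpha_3-\alpha_1-\alpha_2$. So we need
\[
\alpha_2+\alpha_3-\alpha_1=h_a,\qquad \alpha_3-\alpha_1-\alpha_2=h_b.
\]
This is solvable with all $\alpha_k>0$: take $\alpha_2=(h_a-h_b)/2>0$, and choose $\alpha_1,\alpha_3$ large with the correct difference, making $L$ their sum.

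\emph{Verification.} The four-route comparison (Lemma~\ref{lem:four-route}) and the same-arm comparison carry over verbatim with $L=w(A)$. Ties are permitted, so the weak inequalities from Lemma~\ref{lem:two_threshold} suffice. Routes using two long arms are longer because $M_B,M_C>L$. It remains to check pairs $a_1,a_2$ against each other and against $s,t$; these are handled by the neighborly direct edges and by $A$ being the prescribed $s$--$t$ path.

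\emph{Main obstacle.} I expect the delicate step to be the interaction of scales. Lemma~\ref{lem:difference} needs the prefix and suffix potentials beyond $\pm L$ with $L=w(A)$, while $\alpha_1,\alpha_3$ may have to be large. I would first fix the middle data and thresholds, then choose the $\alpha_k$ (which determines $L$), and only then invoke Lemma~\ref{lem:difference} with that $L$. This ordering works because that lemma accepts any $L>1$ and any prescribed middle potentials in $(-1,1)$.
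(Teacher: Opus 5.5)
Your proposal is correct and is essentially the paper's proof: the same partition $D^-,D^0,D^+$, the nested cuts coming from the two internal vertices of $A$, Lemma~\ref{lem:two_threshold} on the middle chains followed by Lemma~\ref{lem:difference}, and the same weights on $A$. Your solution $\alpha_2=(h_a-h_b)/2$, $\alpha_3-\alpha_1=(h_a+h_b)/2$ is exactly the paper's $w(sa)=(L-h_a)/2$, $w(ab)=(h_a-h_b)/2$, $w(bt)=(L+h_b)/2$. The scale issue you flag does not arise: after rescaling so that $|h_a|,|h_b|<1$, the paper simply fixes $L=2$ before invoking Lemma~\ref{lem:difference}, and this already makes all three weights on $A$ positive.
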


\begin{proof}
Let $\cP$ be a consistent path system on $\Theta_{3,3,3}$.  By Proposition~\ref{prop:nonneighborly}, we may assume that $\cP$ is neighborly.  The prescribed $s$--$t$ path is one of the three length-three arms.  By symmetry, write it as
\[
        A:s-a-b-t.
\]
Write the other two arms as
\[
        B:s=b_0,b_1,b_2,b_3=t,
        \qquad
        C:s=c_0,c_1,c_2,c_3=t.
\]

For $D\in\{B,C\}$, written as $D:s=d_0,d_1,d_2,d_3=t$, define
\[
        D^- = \{s\}\cup\{d_i:1\le i\le2,
        \ P_{d_i,t}=d_iDs\,a\,b\,t\},
\]
\[
        D^+ = \{t\}\cup\{d_i:1\le i\le2,
        \ P_{s,d_i}=s\,a\,b\,tDd_i\},
\]
and set $D^0=D\setminus(D^-\cup D^+)$.  As in the proof of Theorem~\ref{thm:theta2bc}, consistency implies that $D^-$ is a prefix, $D^+$ is a suffix, and $D^-\cap D^+=\varnothing$.  Since $\cP$ is neighborly, $d_2\notin D^-$ and $d_1\notin D^+$; in particular $D^0$ has at most two vertices.

For comparable pairs $d_i\in D^-$ and $d_j\in D^+$ with $i<j$, let $F_D$ be the set of pairs for which $P_{d_i,d_j}$ uses the arm $A$, i.e.
\[
        P_{d_i,d_j}=d_iDs\,a\,b\,tDd_j.
\]
Then $F_D$ is a difference Ferrers relation.  Indeed, if $(d_i,d_j)\in F_D$, $i'\le i$, and $j'\ge j$, then the $d_{i'}$--$d_{j'}$ subpath of $P_{d_i,d_j}$ is $d_{i'}Ds\,a\,b\,tDd_{j'}$, so consistency puts $(d_{i'},d_{j'})$ in $F_D$.  The row $s$ and the column $t$ are contained in $F_D$ by the definitions of $D^+$ and $D^-$ and by $P_{s,t}=A$.

Next consider cross-arm pairs $b_i\in V(B)\setminus\{s,t\}$ and $c_j\in V(C)\setminus\{s,t\}$.  There are four relevant routes:
\[
\begin{aligned}
S&: b_iBsCc_j,        &
T&: b_iBtCc_j,\\
U&: b_iBs\,a\,b\,tCc_j, &
V&: b_iBt\,b\,a\,sCc_j .
\end{aligned}
\]
The same consistency argument as in Theorem~\ref{thm:theta2bc} gives
\[
        P_{b_i,c_j}=U
        \Longleftrightarrow
        b_i\in B^-\text{ and }c_j\in C^+,
\]
\[
        P_{b_i,c_j}=V
        \Longleftrightarrow
        b_i\in B^+\text{ and }c_j\in C^-.
\]
Outside these two rectangles the prescribed route is $S$ or $T$, and on the central rectangle $B^0\times C^0$ the $S$-region is an additive Ferrers relation: if $P_{b_i,c_j}=S$, $i'\le i$, and $j'\le j$, then $P_{b_i,c_j}$ contains $b_{i'}BsCc_{j'}$ as a subpath, so $P_{b_{i'},c_{j'}}=S$.

We now record the two cuts produced by the internal vertices $a$ and $b$ of the arm $A$.  For a vertex $v\in B\cup C$, let
\[
        v\in S_a \quad\Longleftrightarrow\quad
        P_{a,v}\text{ begins with }as,
\]
and
\[
        v\in S_b \quad\Longleftrightarrow\quad
        P_{b,v}\text{ begins with }ba.
\]
Consistency gives
\[
        D^-\subseteq S_b\cap D
        \subseteq S_a\cap D
        \subseteq D\setminus D^+
        \qquad(D=B,C).
\]
For instance, if $v\in D^-$, then $P_{v,t}=vDsabt$ contains the subpaths $vDsa$ and $vDsab$, forcing both $P_{a,v}$ and $P_{b,v}$ to go through $s$.  If $v\in D^+$, the path $P_{s,v}=sabtDv$ forces both $P_{a,v}$ and $P_{b,v}$ to go through $t$.  Finally, $S_b\subseteq S_a$ because a path $P_{b,v}$ beginning with $ba$ contains the $a$--$v$ subpath beginning with $as$.  On each $D^0$, both $S_a\cap D^0$ and $S_b\cap D^0$ are prefixes, again by taking subpaths.

Apply Lemma~\ref{lem:two_threshold} to the additive Ferrers $S$-region on $B^0\times C^0$, with the nested prefixes
\[
        S_b\cap B^0\subseteq S_a\cap B^0,
        \qquad
        S_b\cap C^0\subseteq S_a\cap C^0.
\]
We obtain strictly increasing middle potentials $X_i$ on $B^0$, strictly increasing middle potentials $Y_j$ on $C^0$, and thresholds $h_b<h_a$ such that
\[
        P_{b_i,c_j}=S \Longrightarrow X_i+Y_j\le0,
        \qquad
        P_{b_i,c_j}=T \Longrightarrow X_i+Y_j\ge0
\]
for $b_i\in B^0$, $c_j\in C^0$, and such that the two prefix cuts are represented by the same thresholds:
\[
        v\in S_b \Longrightarrow Z_v\le h_b,
        \qquad
        v\notin S_b \Longrightarrow Z_v\ge h_b,
\]
\[
        v\in S_a \Longrightarrow Z_v\le h_a,
        \qquad
        v\notin S_a \Longrightarrow Z_v\ge h_a,
\]
where $Z_{b_i}=X_i$ and $Z_{c_j}=Y_j$ on the central vertices.  Multiplying all middle potentials and the two thresholds by a sufficiently small positive constant, assume
\[
        |X_i|<1,
        \qquad
        |Y_j|<1,
        \qquad
        |h_b|<1,
        \qquad
        |h_a|<1.
\]

Set $L=2$.  Apply Lemma~\ref{lem:difference} separately to the difference Ferrers relations $F_B$ and $F_C$, preserving the middle potentials just chosen.  We obtain strictly increasing sequences
\[
        -M_B=X_0<X_1<X_2<X_3=M_B,
\]
\[
        -M_C=Y_0<Y_1<Y_2<Y_3=M_C,
\]
where $M_B,M_C>L$, all non-end prefix values are below $-L$, all non-end suffix values are above $L$, and the same-arm paths through $A$ are represented by
\[
        (b_i,b_j)\in F_B
        \Longleftrightarrow
        X_j-X_i>M_B+L,
\]
\[
        (c_i,c_j)\in F_C
        \Longleftrightarrow
        Y_j-Y_i>M_C+L.
\]

Define the weights on the chosen arm $A$ by
\[
        w(sa)=\frac{L-h_a}{2},
        \qquad
        w(ab)=\frac{h_a-h_b}{2},
        \qquad
        w(bt)=\frac{L+h_b}{2}.
\]
These three numbers are positive because $|h_a|,|h_b|<L$ and $h_b<h_a$, and their sum is $L$.  On the two remaining arms define
\[
        w(b_{i-1}b_i)=\frac{X_i-X_{i-1}}{2},
        \qquad
        w(c_{j-1}c_j)=\frac{Y_j-Y_{j-1}}{2}.
\]
All these weights are positive.  The total lengths of $B$ and $C$ are $M_B$ and $M_C$, and
\[
        d_B(s,b_i)-d_B(t,b_i)=X_i,
        \qquad
        d_C(s,c_j)-d_C(t,c_j)=Y_j.
\]

We check that every prescribed path is a geodesic.  First, the subpaths of $A$ are geodesics: any alternative to a proper subpath of $A$ must use one of the other arms, whose total length is larger than $L$, while every proper subpath of $A$ has length smaller than $L$; and $P_{s,t}=A$ has length $L<M_B,M_C$.

Consider $a$ and a vertex $v=d_i$ on $D\in\{B,C\}$.  Let $E$ be the other long arm, and let $Z_i$, $M_D$, and $M_E$ denote the corresponding potential and total arm lengths.  The two direct routes through $s$ and through $t$ have lengths
\[
        R_s^a=\frac{L-h_a}{2}+\frac{M_D+Z_i}{2},
\]
\[
        R_t^a=\frac{h_a-h_b}{2}+\frac{L+h_b}{2}
              +\frac{M_D-Z_i}{2}.
\]
Thus $R_s^a\le R_t^a$ exactly when $Z_i\le h_a$.  If $P_{a,v}$ begins with $as$, the threshold construction, together with the inclusion $D^-\subseteq S_a$, gives $Z_i\le h_a$; if $P_{a,v}$ begins with $ab$, it gives $Z_i\ge h_a$.

It remains only to compare with the two routes that use the other long arm.  The route $a-s-E-t-D-v$ has length $R_s^a+M_E-Z_i$, and the route $a-b-t-E-s-D-v$ has length $R_t^a+M_E+Z_i$.  If $Z_i\le h_a$, then
\[
        M_E-Z_i>0,
        \qquad
        (R_t^a+M_E+Z_i)-R_s^a=M_E+h_a>0.
\]
If $Z_i\ge h_a$, then
\[
        M_E+Z_i>0,
        \qquad
        (R_s^a+M_E-Z_i)-R_t^a=M_E-h_a>0.
\]
Here we use $M_E>L>1$ and $|h_a|<1$, together with the facts that prefix potentials are $<-L$, suffix potentials are $>L$, and middle potentials lie in $(-1,1)$.  Hence every prescribed $a$--$v$ path is a geodesic.

The argument for $b$ is identical, using the lower threshold.  The route from $b$ to $v=d_i$ through $s$ has length
\[
        R_s^b=\frac{h_a-h_b}{2}+\frac{L-h_a}{2}
              +\frac{M_D+Z_i}{2},
\]
whereas the route through $t$ has length
\[
        R_t^b=\frac{L+h_b}{2}+\frac{M_D-Z_i}{2}.
\]
Thus $R_s^b\le R_t^b$ exactly when $Z_i\le h_b$.  The two routes through the other long arm are longer by the same calculation with $h_b$ in place of $h_a$.  Hence every prescribed $b$--$v$ path is a geodesic.

For two vertices $d_i,d_j$ on the same arm $D\in\{B,C\}$, with $i<j$, the direct subpath of $D$ has length $(Z_j-Z_i)/2$, while the route using $A$ has length
\[
        M_D+L-\frac{Z_j-Z_i}{2}.
\]
The route using the other long arm is still longer, since that arm has total length larger than $L$.  Therefore the route through $A$ is geodesic exactly when $Z_j-Z_i\ge M_D+L$, and the direct route is geodesic exactly when $Z_j-Z_i\le M_D+L$.  If the prescribed path uses $A$, then $(d_i,d_j)\in F_D$ and Lemma~\ref{lem:difference} gives $Z_j-Z_i>M_D+L$.  If the prescribed path is direct and $(d_i,d_j)\in D^-\times D^+$, the same lemma gives $Z_j-Z_i\le M_D+L$.  In all other direct cases the inequality $Z_j-Z_i\le M_D+L$ follows from the potential ranges: two prefix vertices, two suffix vertices, or two middle vertices differ by less than $M_D+L$, and a prefix-to-middle or middle-to-suffix pair differs by less than $M_D+1<M_D+L$.

Finally take $b_i\in V(B)\setminus\{s,t\}$ and $c_j\in V(C)\setminus\{s,t\}$.  The necessary and sufficient comparisons are those of Lemma~\ref{lem:four-route}.
The rectangles $B^-\times C^+$ and $B^+\times C^-$ are exactly the $U$ and $V$ regions, and the potential ranges give the corresponding inequalities.  If $b_i\in B^-$ and $c_j\notin C^+$, then $X_i\le -L$ and $Y_j<L$, so $S$ is geodesic; the case $c_j\in C^-$ and $b_i\notin B^+$ is symmetric.  If $b_i\in B^+$ and $c_j\notin C^-$, then $X_i\ge L$ and $Y_j>-L$, so $T$ is geodesic; the case $c_j\in C^+$ and $b_i\notin B^-$ is symmetric.  The only remaining pairs lie in $B^0\times C^0$, where the two-threshold Ferrers representation gives the required weak sign of $X_i+Y_j$.  These weak inequalities are sufficient because metrizability allows ties among shortest paths.  Thus every prescribed cross-arm path is a geodesic.

All unordered pairs of vertices have now been checked, so the weighting $w$ induces $\cP$.  Hence every neighborly consistent path system on $\Theta_{3,3,3}$ is metrizable, and the non-neighborly systems are metrizable by Proposition~\ref{prop:nonneighborly}.
\end{proof}

\begin{proof}[Proof of Theorem~\ref{thm:main}]
If $a=1$, then $\Theta_{a,b,c}$ is outerplanar, hence metrizable by Proposition~\ref{prop:CLfacts}.  If $a=2$, metrizability follows from Theorem~\ref{thm:theta2bc}.  The remaining positive case $(a,b,c)=(3,3,3)$ is Proposition~\ref{prop:theta333}.

Conversely, if $a\ge3$, $b\ge3$, and $c\ge4$, then non-metrizability follows from Proposition~\ref{prop:nonmetro_side}.  These alternatives are exhaustive when $a\le b\le c$.

For strict metrizability, the cases $a=1$ are covered by outerplanarity and the cases $a=2$ by Theorem~\ref{thm:theta2bc}.  If $a\ge3$, then $\Theta_{a,b,c}$ contains a subdivision of $\Theta_{3,3,3}$, which is not strictly metrizable by Proposition~\ref{prop:CLfacts}(iv).  Topological-minor closure therefore excludes every such theta graph.
\end{proof}

\section{Concluding remarks}

The classification shows that, among theta graphs, the first obstruction is exactly the six-inequality block realized by $\Theta_{3,3,4}$.  All longer theta graphs with three arms of lengths at least $3,3,4$ inherit this obstruction as a topological minor.  On the positive side, a length-two arm suppresses the possible obstruction: the proof for $\Theta_{2,b,c}$ turns consistency into Ferrers relations and realizes them by one-dimensional potentials.

The exceptional case $\Theta_{3,3,3}$ shows that the potential method is not limited to a single short internal vertex.  The two internal vertices of the chosen length-three arm give two nested prefix cuts, and the fact that the remaining central chains have size at most two is exactly what permits the two-threshold Ferrers representation.  The weak inequalities in Lemma~\ref{lem:two_threshold} are essential: they account for the separation between ordinary and strict metrizability.

\begin{corollary}[minimal theta obstructions]\label{cor:minimal-obstructions}
In the topological-minor order restricted to theta graphs, $\Theta_{3,3,4}$ is the unique minimal non-metrizable graph, while $\Theta_{3,3,3}$ is the unique minimal non-strictly-metrizable graph.
\end{corollary}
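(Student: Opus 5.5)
The corollary will follow directly from Theorem~\ref{thm:main}, together with a description of the topological-minor order on theta graphs. That description is elementary: $\Theta_{a',b',c'}$ is a topological minor of $\Theta_{a,b,c}$, with both triples sorted, if and only if $a'\le a$, $b'\le b$, $c'\le c$ after matching arms.

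The ``if'' direction is clear, since we can subdivide arms. For the ``only if'' direction, note that any subdivision of a theta graph inside another theta graph must place its two degree-three vertices at $s,t$. These are the only vertices of degree three. The three arms of the subdivision are then internally disjoint $s$--$t$ paths, so they must be the three arms themselves. For sorted triples this yields the componentwise inequality. I would state this as a short lemma. For the minimal-element argument, however, only the easy ``if'' direction and the degree argument are actually needed.

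\emph{Non-metrizable case.} By Theorem~\ref{thm:main}, the non-metrizable thetas are exactly those with $a\ge3$, $b\ge3$, $c\ge4$. Each of these contains $\Theta_{3,3,4}$ by subdivision, and $\Theta_{3,3,4}$ itself is non-metrizable. It remains to check that $\Theta_{3,3,4}$ is minimal. Any theta graph that is a proper topological minor of it is, by the order description, some $\Theta_{a',b',c'}$ with $(a',b',c')\le(3,3,4)$ componentwise and different from it. Every such triple has either $a'\le2$ or $(a',b',c')=(3,3,3)$, so it is metrizable by the theorem. Uniqueness holds because every non-metrizable theta lies above $\Theta_{3,3,4}$, so no other element can be minimal.

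\emph{Strict case.} The argument is the same. The non-strictly-metrizable thetas are exactly those with $a\ge3$. Each contains $\Theta_{3,3,3}$, and every proper theta minor of $\Theta_{3,3,3}$ has smallest arm at most $2$, so it is strictly metrizable.

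The only step needing care is the order description. The potential subtlety is that a subdivision could route its branch vertices elsewhere. This is ruled out by degrees, because theta graphs have exactly two vertices of degree three. I expect no genuine obstacle here.
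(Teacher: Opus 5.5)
Your proposal is correct and follows the paper's route. Both arguments read the minimal elements directly off the explicit conditions of Theorem~\ref{thm:main}: every non-metrizable (resp.\ non-strictly-metrizable) theta graph contains a subdivision of $\Theta_{3,3,4}$ (resp.\ $\Theta_{3,3,3}$), and every proper theta topological minor of these lies on the positive side. The only difference is that you justify, via the degree-three argument, that theta topological minors correspond to componentwise smaller sorted arm lengths, which the paper uses without comment.
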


\begin{proof}
Theorem~\ref{thm:main} shows that every non-metrizable theta graph contains $\Theta_{3,3,4}$ as a subdivision and that every non-strictly-metrizable theta graph contains $\Theta_{3,3,3}$ as a subdivision.  Every proper theta topological minor of the respective graph lies on the positive side of the corresponding classification.
\end{proof}

\begin{corollary}[constructive recognition]\label{cor:algorithm}
The classifications in Theorem~\ref{thm:main} are decidable directly from $(a,b,c)$.  Moreover, given a consistent path system on a positive theta graph, the proofs construct a rational inducing weighting in polynomial time; when $a\le2$, the weighting can be chosen to induce the system strictly.
\end{corollary}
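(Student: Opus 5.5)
The statement to prove is Corollary~\ref{cor:algorithm}. It has two parts. The first is decidability of the classification directly from $(a,b,c)$. The second is a polynomial-time construction of a rational inducing weighting, which is strict when $a\le2$.

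The first part is immediate. After sorting so that $a\le b\le c$, Theorem~\ref{thm:main} reduces membership to the two arithmetic tests ``$a\le2$ or $(a,b,c)=(3,3,3)$'' and ``$a\le2$''. These take constant time.

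For the second part I will go through the proof of Theorem~\ref{thm:main} case by case. In each case I check two things: every choice made there is computable in time polynomial in $|V|$, and every number produced is rational. The first step is neighborliness. Deciding whether the system is neighborly is a linear scan over the edges. If it is not, Proposition~\ref{prop:nonneighborly} reduces to an outerplanar graph $G-e$. For that graph I will invoke the constructive form of the Cizma--Linial strict realization of outerplanar systems, which I take to be algorithmic and rational, as in their paper. The heavy edge $e$ then gets an integer weight exceeding the rational total. The case $a=1$ is handled the same way. For $a=2$ with $P_{s,t}\ne A$, Lemma~\ref{lem:cycle_cut} is explicitly constructive. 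Persistent edges are found by forming the trees $T_v$, which is polynomial. The quotient weights are sums of the $0/1$ rays $R_k,U_b$ plus $\tfrac12 Z$ and $\varepsilon u$, and $\varepsilon$ can be chosen rationally from the finitely many slacks. The expansion in Lemma~\ref{lem:persistent_expansion} inserts rational weights below an explicit bound $\gamma/(10|V(H)|)$. Proposition~\ref{prop:theta2_nonshort} then adds $M$ and $M+h$, both rational.

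The main case is $P_{s,t}=A$. Here the sets $D^\pm,D^0$, the relations $F_D$, the $S$-region, and the cuts are all read off from $\cP$ in polynomial time. Lemma~\ref{lem:additive} gives explicit labels $x_i=i$ and $y_j=-k_j-\tfrac12+\delta j$ with $\delta$ rational. It also gives the shift $\alpha$ and threshold $h$, which can be chosen rational because they lie in nonempty open intervals with rational endpoints. Lemma~\ref{lem:difference} then builds $M=R+L$ and the $X_i$ by rational affine operations. The edge weights are half-differences of these values, so they are rational. Strictness follows from Theorem~\ref{thm:theta2bc}. For $\Theta_{3,3,3}$, Lemma~\ref{lem:two_threshold} works on chains of size at most $2$. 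Its gaps can be chosen in $\{1,2\}$ or as integers, so $T$, $h_a$, $h_b$ are rational, and Proposition~\ref{prop:theta333} gives a rational (non-strict) weighting.

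The step I expect to be the main obstacle is the bookkeeping of bit sizes. The rescalings, the small $\varepsilon$ and $\gamma$, and the choice of $M$ must stay polynomially bounded. I will handle this by noting that each parameter is chosen as a simple rational strictly inside an interval. Each endpoint of such an interval is a sum of polynomially many earlier values, for example a slack bounded below by an explicit fraction like $1/(2n)$ or the minimum over finitely many comparisons. So a constant number of rounds of choosing rationals with polynomially many bits suffices.
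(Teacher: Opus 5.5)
Your proposal is correct and follows essentially the same route as the paper's proof. In both, the combinatorial data ($D^\pm$, $D^0$, the Ferrers regions, the cuts, the persistent edges) are read off by scanning $\cP$, and Lemmas~\ref{lem:cycle_cut}, \ref{lem:additive}, \ref{lem:difference}, and \ref{lem:two_threshold} involve only explicit rational choices; your bit-size bookkeeping fills in a point the paper leaves implicit.

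The one external input you flag is a constructive rational form of the outerplanar theorem, needed for $a=1$ and for non-neighborly systems, and the paper relies on it just as implicitly. It can be discharged uniformly. Once existence is known, a rational weighting is a feasible point of a polynomial-size homogeneous linear system in $w$: impose $w(P_{u,y})\le w(P_{u,x})+w(xy)$ for every root $u$ and oriented edge $(x,y)$, together with $w>0$. For the strict version, make the inequality strict whenever $P_{u,y}$ does not end with $xy$. By homogeneity the strict inequalities can be normalized to $\ge 1$, so linear programming finds such a rational solution in polynomial time.
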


\begin{proof}
The decision test is the pair of explicit conditions in Theorem~\ref{thm:main}.  For realization, the sets $D^-,D^0,D^+$ and the Ferrers row boundaries are obtained by scanning the prescribed paths.  Lemmas~\ref{lem:additive}, \ref{lem:difference}, and \ref{lem:two_threshold} use only ordering, addition, and rational separation of finitely many thresholds.  The cycle reduction contracts persistent edges and evaluates the explicit weights $R_k$.  All steps use polynomially many rational arithmetic operations and yield rational edge lengths.
\end{proof}

\section*{Funding}
This work was supported by the Natural Science Foundation of Shandong Province (Grant No. ZR2024MA073). 

\section*{Data availability}
No new research data were generated or analyzed in this theoretical study. All arguments are contained in the manuscript.

\section*{Declaration of competing interest}
The author declares no known competing financial interests or personal relationships that could have appeared to influence the work reported in this paper.

\section*{Declaration of generative AI and AI-assisted technologies in the manuscript preparation process}
We used ChatGPT to brainstorm the structure, anticipate potential objections, and refine the text. We are fully responsible for all claims, citations, calculations, and the final wording.

\section*{CRediT author statement}
Guangfu Wang: Conceptualization, Methodology, Formal analysis, Investigation, Writing - original draft, Writing - review and editing, Funding acquisition.

\end{document}